\documentclass[pdftex]{amsart}

\address{Research and Education Center for Natural Sciences, Keio University, 4-1-1 Hiyoshi, Kohoku-ku, Yokohama, Kanagawa, 223-8521, Japan}
\email{\href{mailto:isoshima@keio.jp}{isoshima@keio.jp}}

\usepackage{amsmath,amssymb}
\usepackage{overpic}
\usepackage{amsfonts}
\usepackage{graphicx}

\usepackage{hyperref}
\usepackage{mathtools}

\usepackage[hang,small,bf]{caption}
\usepackage[subrefformat=parens]{subcaption}
\usepackage{xcolor}
\usepackage{amsthm}
\usepackage{longtable}
\usepackage{bm}

\theoremstyle{plain}
\newtheorem{thm}{Theorem}[section]
\newtheorem{prop}[thm]{Proposition}
\newtheorem{lem}[thm]{Lemma}
\newtheorem{cor}[thm]{Corollary}
\newtheorem*{thm*}{Theorem}
\newtheorem*{cor*}{Corollary}
\newtheorem*{prop*}{Proposition}

\theoremstyle{definition}
\newtheorem{dfn}[thm]{Definition}
\newtheorem{rem}[thm]{Remark}

\newtheorem{que}[thm]{Question}

\newtheorem*{que*}{Question}
\newtheorem*{con*}{Conjecture}
\newtheorem*{nota*}{Notation}

\begin{document}

\title[Minimal genus trisection diagrams of the elliptic surfaces $E(n)$]{Minimal genus trisection diagrams of the elliptic surfaces $E(n)$ via handle diagrams}

\author{Tsukasa Isoshima}
\subjclass{}
\keywords{Trisection, Handle diagram, Elliptic surface}

\begin{abstract}
Lambert-Cole and Meier showed that the elliptic surface $E(n)$ admits a $(12n-2,0)$-trisection, considering the property that $E(n)$ is a certain double branched cover of $S^2 \times S^2$, which is a minimal genus trisection. In this paper, we clarify a way to construct an explicit $(12n-2,0)$-trisection diagram of $E(n)$ from its handle diagram arising from its Lefschetz fibration. 
\end{abstract}

\maketitle

\section{Introduction}\label{sec:intro}

In 2016, Gay and Kirby \cite{MR3590351} introduced the notion of trisections into 4-dimensional topology. Roughly speaking, a \textit{trisection} of a closed 4-manifold is a decomposition of the closed 4-manifold into three 4-dimensional 1-handlebodies, each of which is diffeomorphic to a connected sum of several copies of $S^1 \times D^3$. The triple intersection of the three 4-dimensional 1-handlebodies is diffeomorphic to a closed surface of genus $g$, and this $g$ is called the \textit{genus} of the trisection.

Trisection diagrams are new diagrams describing 4-manifolds \cite{MR3590351}. A trisection diagram consists of an orientable closed surface and three cut systems on the surface called $\alpha$-, $\beta$- and $\gamma$-curves (red, blue and green, respectively; see Figure \ref{fig:CP^2} for example). It is known \cite{MR3590351} that two 4-manifolds are diffeomorphic if and only if two corresponding trisection diagrams are related by surface diffeomorphisms, handle slides among the same family curves and (de)stabilizations (Corollary \ref{cor:henkei}).

The minimum integer $g$ such that a 4-manifold admits a trisection of genus $g$ is called the \textit{trisection genus} of the 4-manifold. 
Trisection genera can be determined by combining upper and lower bounds. Upper bounds are often obtained either by constructing trisections or by constructing trisection diagrams. Examples of studies that obtain upper bounds via the former approach include \cite{MR3917737, MR3824315, MR4477412}. Examples obtained via the latter approach include \cite{MR3917737, MR4890820, MR4873874, takahashi2026trisectiongenusknottraces}.

Lambert-Cole and Meier \cite[Theorem 7.7]{MR4493476} showed by the former approach rather than the latter that the trisection genus of the elliptic surface $E(n)$ is $12n-2$, using the property that $E(n)$ is a certain double branched cover of $S^2 \times S^2$. 

In this paper, we clarify a way to construct an explicit minimal genus trisection diagram of $E(n)$ as the latter approach. We firstly construct an explicit large genus trisection diagram of $E(n)$, using an algorithm that takes handle diagrams to trisection diagrams introduced by Kepplinger \cite{MR4460230} (Theorem \ref{thm:algorithm}).

\begin{prop*}[Proposition \ref{prop:E(n)}]
Figure \ref{fig:step6} is a $(36n^2+6n+6;2,2,36n^2-6n+4)$-trisection diagram of $E(n)$ obtained by performing the algorithm in Theorem \ref{thm:algorithm} to the handle diagram in Figure \ref{fig:E(n)_Kirby}. 
\end{prop*}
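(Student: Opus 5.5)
The proof is a direct application of Kepplinger's algorithm (Theorem \ref{thm:algorithm}) to the handle diagram in Figure \ref{fig:E(n)_Kirby}. I then count the curves it produces and check the result against the Euler characteristic. First I read off the handle structure of Figure \ref{fig:E(n)_Kirby}, which comes from the Lefschetz fibration $E(n)\to S^2$ with regular fiber $T^2$ and $12n$ singular fibers:
\begin{itemize}
\item one $0$-handle and two $1$-handles, together with one $0$-framed $2$-handle, giving $T^2\times D^2$;
\item $12n$ vanishing-cycle $2$-handles, each with framing $-1$ relative to the fiber framing, alternating between the two standard curves on the torus;
\item a section $2$-handle with framing $-n$;
\item two $3$-handles and one $4$-handle, coming from the complementary $T^2\times D^2$.
\end{itemize}
So $k_1=2$, $k_3=2$, and the $2$-handle link has $12n+2$ components. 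As a first consistency check, $\chi(E(n))=1-2+(12n+2)-2+1=12n$.

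Next I run the steps of the algorithm in order. I put the diagram in the standard position the algorithm requires: $1$-handles drawn as dotted circles or attaching balls, and the $2$-handle link projected into a plane. I choose a system of tunnels (crossing arcs) so that the complement of the $1$-handles together with the $2$-handle link, thickened by the tunnels, is a genus-$g$ handlebody whose boundary is the central surface. Then I write down the $\alpha$-curves as meridians of the $1$-handles and tunnels, the $\beta$-curves as the $2$-handle attaching curves together with the dual tunnel curves, and the $\gamma$-curves as the $\beta$-curves slid and framed according to the surgery prescription of the algorithm. The two $3$-handles are absorbed into the $\gamma$-sector as in Kepplinger's construction. Carrying this out step by step, ending at Figure \ref{fig:step6}, gives the diagram in the statement.

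It remains to verify the parameters $(g;k_1,k_2,k_3)=(36n^2+6n+6;2,2,36n^2-6n+4)$. The genus equals the number of $1$-handles plus the number of tunnels used. The tunnels are forced mainly by the crossings of the projected $2$-handle link. The $12n$ vanishing cycles pairwise interact through the $1$-handles and the section handle, which is why the genus grows quadratically, like $36n^2$. I will count crossings in Figure \ref{fig:E(n)_Kirby}, organizing the count by pairs of handles: vanishing cycle with vanishing cycle, vanishing cycle with section, and fiber handle with the rest. Summing the contributions should give $36n^2+6n+6$. The value $k_1=2$ holds because the $\alpha\beta$ Heegaard diagram represents $\#^2 S^1\times S^2$, matching the two $1$-handles. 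Similarly $k_2=2$ comes from the $\beta\gamma$ pair, whose standard destabilized form has exactly two non-dual pairs. Once $g$, $k_1$, $k_2$ are fixed, $k_3$ is forced by $\chi=2+g-k_1-k_2-k_3$: with $\chi=12n$ this gives $k_3=36n^2+6n+6-2-2+2-12n=36n^2-6n+4$, as claimed.

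The main obstacle is the tunnel and crossing count. It requires a careful, $n$-uniform description of the projection of the $12n$ vanishing cycles and the section handle. The number of tunnels must match $36n^2+6n+4$ exactly, not just asymptotically. I would first handle $n=1$ by hand, then identify the repeating block contributed by each additional group of $12$ vanishing cycles. Finally I would prove the quadratic formula by induction on $n$.
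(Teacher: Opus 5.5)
Your overall route is the paper's: run Steps 1--6 of Theorem~\ref{thm:algorithm}, count, and close with the Euler characteristic. However, your description of the algorithm is wrong, and the parameter check depends on getting it right.

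In Theorem~\ref{thm:algorithm} the $\alpha$- and $\beta$-curves come only from two sources: the two $1$-handles (a parallel pair each, Step 1) and the crossings, including kinks added in Step 2 (one dual pair per crossing, Step 3). The attaching curves $K_j$ are not $\beta$-curves. After the $\alpha$-curves are slid so that $\lvert\alpha_i\cap K_j\rvert=\delta_{ij}$ (Step 4), the $K_j$ become $\gamma_1,\dots,\gamma_\ell$ (Step 5), and the remaining $\gamma_i$ are parallel copies of $\alpha_i$ (Step 6). The $3$-handles play no role. This has two consequences:
\begin{itemize}
\item $g=2+c$ exactly, where $c$ is the number of crossings after Step 2. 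There are no other tunnels, so what must be counted is $c=36n^2+6n+4$.
\item $(\gamma,\alpha)$ is in standard position by construction, with $\ell=12n+2$ dual pairs and $g-\ell$ parallel pairs.
\end{itemize}
So $k_3=g-\ell=36n^2-6n+4$ is read off directly, and the paper then obtains $k_2=2$ from $12n=\chi(E(n))=2+g-k_1-k_2-k_3$.

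You use $\chi$ the other way round. That forces you to prove $k_2=2$ directly for $(\beta,\gamma)$, which is the one pair whose type is not visible from the construction. Saying that its ``standard destabilized form has exactly two non-dual pairs'' only restates the claim. It can be justified:
\begin{itemize}
\item $H_\beta\cup H_\gamma=\partial(X_1\cup X_3)$.
\item $X_1\cup X_3$ is $X_1\cong\natural_2 S^1\times D^3$ with $2$-handles attached along $\gamma_1=K_1,\dots,\gamma_\ell=K_\ell$, with the framings fixed in Step 3.
\item So $X_1\cup X_3$ is the $0$-, $1$-, $2$-handlebody of Figure~\ref{fig:E(n)_Kirby}, which is capped off by two $3$-handles and a $4$-handle.
\end{itemize}
But this argument needs the correct curve roles, which your description reverses.

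Your early line ``$k_1=2$, $k_3=2$'' also conflates handle counts with trisection parameters, and it contradicts your final value of $k_3$. Finally, you leave the genus as a plan (``should give''). The crossing count after Step 2 is the actual numerical content of the proposition, so it has to be carried out, not just organized.
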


Figure \ref{fig:E(n)_Kirby} is a handle diagram of $E(n)$ obtained from its Lefschetz fibration.
We then show the following main theorem.

\begin{thm*}[Theorem \ref{thm:main theorem}]
A $(12n-2,0)$-trisection diagram of $E(n)$ can be obtained from a $(36n^2+6n+6;2,2,36n^2-6n+4)$-trisection diagram of $E(n)$ in Figure \ref{fig:step6} by handle slides and destabilizations. 
\end{thm*}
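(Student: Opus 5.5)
The plan is to destabilize the diagram of Figure \ref{fig:step6} down to genus $12n-2$. First compare the parameters. A $(g;k_1,k_2,k_3)$-trisection has Euler characteristic $2+g-k_1-k_2-k_3$. The starting diagram has $g=36n^2+6n+6$ and $k_1+k_2+k_3=36n^2-6n+8$, which gives $\chi=12n$, in agreement with $\chi(E(n))=12n$. A $(12n-2,0)$-trisection also has $\chi=12n$. So the total number of destabilizations needed is
\begin{equation*}
g-(12n-2)=36n^2-6n+8 ,
\end{equation*}
which equals $k_1+k_2+k_3$. Each destabilization lowers $g$ by one and lowers exactly one $k_i$ by one. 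We must therefore destabilize $2$ times with respect to the $(\alpha,\beta)$ pair, $2$ times with respect to the $(\beta,\gamma)$ pair, and $36n^2-6n+4$ times with respect to the $(\gamma,\alpha)$ pair. Equivalently, each time we find a curve of one family and a curve of another family that are parallel, together with a curve of the third family meeting them once geometrically and disjoint from the rest, and we cut the surface along that configuration.

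The first step is the large $(\gamma,\alpha)$ family. In Kepplinger's algorithm (Theorem \ref{thm:algorithm}), most of the genus comes from tubes added at crossings and at the attaching regions of the $2$-handles, and many $\alpha$- and $\gamma$-curves arise as meridians or parallel copies of one another. I would go through the local pictures of Figure \ref{fig:step6}, one crossing or clasp at a time. Using handle slides within $\alpha$ and within $\gamma$, I would make the relevant $\alpha$- and $\gamma$-curves identical and arrange for a $\beta$-curve dual to both. Then I would destabilize. Because the handle diagram in Figure \ref{fig:E(n)_Kirby} comes from a Lefschetz fibration (a fiber $T^2$ plus $12n$ vanishing cycles, with the 1-handles and the section), these local pictures repeat periodically. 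So it should be enough to treat one vanishing-cycle block and one crossing type, and then iterate; counting the blocks should give $36n^2-6n+4$. The quadratic term in $n$ comes from crossings between the $2$-handle strands, so I would check this count against the crossing number of Figure \ref{fig:E(n)_Kirby}.

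The second step handles the remaining four destabilizations. The two $(\alpha,\beta)$ ones and the two $(\beta,\gamma)$ ones correspond to the two $1$-handles and the two $3$-handles, that is, the $1$-handles of the fiber torus and their dual $3$-handles. I would do these last. After slides, some $\alpha$-curve will be parallel to a $\beta$-curve near each $1$-handle tube, with a $\gamma$-curve passing once through it. The $3$-handle end is handled in the same way, or by symmetry.

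The main obstacle is the first step. After each destabilization the neighbouring curves must still be in a position where the next local configuration is available, and we must control the slides that remove intersections of third-family curves with the cancelling pair. I would first try to organize the slides so that every slide is supported within a single block of the Lefschetz fibration picture. I would verify one full block explicitly with figures, and then argue by induction on the number of vanishing cycles. Throughout, the Euler characteristic count above serves as a check that no destabilization is wasted.
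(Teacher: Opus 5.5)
\textbf{There is a genuine gap: you have no mechanism for the two $k_2$-destabilizations, and that is where the paper's proof needs its key idea.}

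What you have right: the bookkeeping is correct and matches the paper. Indeed $\chi=2+g-\sum k_i=12n$ at both ends. Since only destabilizations are allowed, the split into $2$ destabilizations lowering $k_1$, $2$ lowering $k_2$, and $36n^2-6n+4$ lowering $k_3$ is forced. The paper also does them in your order: $k_3$ in Lemmas \ref{lem:destab1} and \ref{lem:destab2}, then $k_1$ in Lemma \ref{lem:destab3}, then $k_2$.

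You have, however, put the difficulty in the wrong place. Most of the $(\gamma,\alpha)$ destabilizations are free. Step 6 of the algorithm makes $\alpha_i$ and $\gamma_i$ parallel for $i>\ell=12n+2$. For $36n^2-6n+2$ of these pairs, the $\beta$-curves in the centre of Figure \ref{fig:step6} give destabilizations directly, with no slides and no block-by-block induction.

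The gap is the sentence ``the $3$-handle end is handled in the same way, or by symmetry.'' There is no such symmetry.
\begin{itemize}
\item The algorithm never sees the $3$-handles. The $\gamma$-curves are the $2$-handle attaching circles plus copies of $\alpha$-curves.
\item $(\Sigma;\beta,\gamma)$ is a diagram of $\#_2S^1\times S^2$ only implicitly, as the boundary of the $2$-handlebody.
\item Lowering $k_2$ needs a $\beta$-curve parallel to a $\gamma$-curve, plus an $\alpha$-curve dual to both. Producing this means exhibiting, by slides, the $S^1\times S^2$ summands of that diagram.
\item This is not local. The slides the paper uses here are global: $\gamma$-strands slid twice over each of $\gamma_1,\dots,\gamma_{12n-4}$; $\beta_1,\beta_3,\dots,\beta_{12n-3}$ slid over $\beta_C$ and $\beta_D$; and a slide of the twist curve $t^{-n}$. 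Even the $k_1$ step slides the two complicated $\alpha$-curves over the $\alpha$-curves on all the disks. Slides supported in a single vanishing-cycle block will not get there.
\item Turning the handle decomposition upside down is not an operation on this trisection diagram.
\end{itemize}

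The paper's missing idea is to avoid drawing these slides at all.
\begin{enumerate}
\item After the slides leading to Figure \ref{fig:last3}, each $\beta_i$ with $i\in\{1,\dots,12n-4,C,D\}$ meets only $\gamma_i$, and meets it once.
\item Forgetting $\alpha$, the Heegaard diagram $(\beta,\gamma)$ alone is destabilized down to a genus-$2$ diagram of $\#_2S^1\times S^2$.
\item Waldhausen's theorem, that Heegaard splittings of $\#_nS^1\times S^2$ are standard, then shows the two remaining $\beta$-curves can be slid to be parallel to $\gamma^s_\ast$ and $\gamma^s_{\ast\ast}$.
\item These slides are lifted back to the genus-$12n$ surface via Lemma \ref{lem:destabilization2}, each strand sliding once or twice over the cancelled $\beta_i$.
\item Finally, $\alpha_1$ and $\alpha_2$ are checked to meet $\gamma^s_\ast$ and $\gamma^s_{\ast\ast}$ once each. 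This supplies the dual curves for the last two destabilizations.
\end{enumerate}

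Your proposal has neither an explicit sequence of slides nor a substitute for this recognize-and-lift argument. It also never says where a dual $\alpha$-curve would come from. So even granting your first two steps, it stops at a $(12n;0,2,0)$-diagram like Figure \ref{fig:after_destab4}, which has genus $12n$, not $12n-2$.
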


We prove Theorem \ref{thm:main theorem} by destabilizing the trisection diagram in Figure \ref{fig:step6} explicitly and reducing its genus. In particular, we can draw a $(12n;0,2,0)$-trisection diagram of $E(n)$ as in Figure \ref{fig:after_destab4}. As a corollary, we recover the result of Lambert-Cole and Meier.

\begin{cor*}[Corollary \ref{cor:genus}]
The trisection genus of $E(n)$ is $12n-2$.
\end{cor*}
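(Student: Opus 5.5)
The statement to prove is the Corollary: the trisection genus of $E(n)$ equals $12n-2$. I will establish it by matching an upper bound with a lower bound.

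\emph{Upper bound.} By the main theorem (Theorem \ref{thm:main theorem}), handle slides and destabilizations turn the diagram of Figure \ref{fig:step6} into a $(12n-2,0)$-trisection diagram of $E(n)$. By Proposition \ref{prop:E(n)}, the diagram of Figure \ref{fig:step6} does represent $E(n)$. Handle slides and destabilizations preserve the diffeomorphism type of the described 4-manifold (Corollary \ref{cor:henkei}), so $E(n)$ admits a trisection of genus $12n-2$. Hence the trisection genus $g(E(n))$ satisfies $g(E(n))\le 12n-2$.

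\emph{Lower bound.} I will use the standard Euler characteristic bound. For a $(g;k_1,k_2,k_3)$-trisection of a closed 4-manifold $X$, assembling $X$ from the three 4-dimensional 1-handlebodies and their pairwise intersections gives
\[
\chi(X)=2+g-k_1-k_2-k_3 .
\]
Since each $k_i\ge 0$, this yields $g\ge \chi(X)-2$. I also want to rule out any improvement from nonzero $k_i$. Each $k_i$ bounds the rank of the 1-handlebody's contribution to $\pi_1$, and since $E(n)$ is simply connected there is no benefit from taking $k_i>0$. The Euler-characteristic bound already suffices in any case.

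For $E(n)$ we have $\chi(E(n))=12n$, coming from $b_2=12n-2$ and $b_1=b_3=0$. Therefore $g\ge 12n-2$ for every trisection of $E(n)$. Combined with the upper bound, $g(E(n))=12n-2$.

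The only nontrivial input is the main theorem itself, which I am allowed to assume. Within the corollary, the main point requiring care is the Euler characteristic formula. I will verify it by computing $\chi$ of the three pieces $\natural^{k_i}S^1\times D^3$, each with $\chi=1-k_i$, then their pairwise intersections, which are genus-$g$ handlebodies with $\chi=1-g$, and finally the central surface with $\chi=2-2g$. Inclusion--exclusion then gives
\[
\chi(X)=\textstyle\sum_i (1-k_i)-3(1-g)+(2-2g)=2+g-\sum_i k_i .
\]
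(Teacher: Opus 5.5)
Your proposal is correct and follows the same route the paper implicitly intends. The upper bound comes from the $(12n-2,0)$-trisection diagram produced in Theorem \ref{thm:main theorem}, and the lower bound comes from $\chi(X)=2+g-(k_1+k_2+k_3)$ (already quoted in the proof of Proposition \ref{prop:E(n)}) together with $\chi(E(n))=12n$; your aside about $\pi_1$ is unnecessary, since $k_i\ge 0$ alone gives $g\ge 12n-2$.
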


Two trisections of the same 4-manifold are \textit{isotopic} if there exists an ambient isotopy taking each 1-handlebody to the corresponding one (Definition \ref{def:isotopy}).

\begin{que*}[Question \ref{que:isotopic}]
Are the minimal genus trisection of $E(n)$ in Theorem \ref{thm:main theorem} and the one in \cite[Theorem 7.7]{MR4493476}, isotopic?
\end{que*}

\section*{Acknowledgement}
The author would like to thank Natsuya Takahashi for helpful comments on trisection genus. The author was partially supported by JSPS KAKENHI Grant Number JP25KJ0301.

\section{Preliminaries}
In this paper, unless otherwise stated, each 4-manifold is compact, connected, oriented and smooth. It is denoted by $X \cong Y$ that manifolds $X$ and $Y$ are diffeomorphic.

\subsection{Trisections of 4-manifolds}

In this subsection, we recall the definition and some properties for trisections.

\begin{dfn}
Let $X$ be a closed 4-manifold and $g$ and $k_i$ non-negative integers with $k_i \le g$ for $i=1,2,3$. A $(g;k_1,k_2,k_3)$-\textit{trisection} of $X$ is a 3-tuple $(X_1,X_2,X_3)$ satisfying the following conditions:
\begin{itemize}
\item $X=X_1 \cup X_2 \cup X_3$,
\item For each $i=1,2,3$, $X_i \cong \natural_{k_i} S^1 \times D^3$,
\item For each $i=1,2,3$, $X_i \cap X_{i+1} \cong \natural_{g} S^1 \times D^2$, where $X_4=X_1$ and 
\item $X_1 \cap X_2 \cap X_3 \cong \#_{g} S^1 \times S^1 = \Sigma_g$.
\end{itemize}
\end{dfn}

Let $H_{\alpha} = X_3 \cap X_1$, $H_{\beta} = X_1 \cap X_2$ and $H_{\gamma} = X_2 \cap X_3$. The union $H_{\alpha} \cup H_{\beta} \cup H_{\gamma}$ is called the \textit{spine} of the trisection. The integer $g$ is called the \textit{genus} of the trisection. The 4-tuple $(g;k_1,k_2,k_3)$ is called the \textit{type} of the trisection. If $k_1=k_2=k_3$, the trisection is said to be \textit{balanced}, and the type is denoted by $(g,k)$ for short, where $k=k_i$ ($i=1,2,3$). Otherwise, the trisection is said to be \textit{unbalanced}.

\begin{dfn}
The \textit{trisection genus} $g(X)$ of a closed 4-manifold $X$ is defined as the minimum integer $g$ such that $X$ admits a trisection of genus $g$. 
\end{dfn}

\begin{dfn}\label{def:isotopy}
Two trisections $(X_1, X_2, X_3)$ and $(Y_1,Y_2,Y_3)$ of a closed 4-manifold $Z$ are \textit{isotopic} if there exists an isotopy $\{h_t\}_{0 \le t \le 1} \colon Z \to Z$ such that $h_0=id_{Z}$ and $h_1(X_i)=Y_i$ for each $i=1,2,3$.
\end{dfn}

\begin{dfn}\label{def:trisection diagram}
A 4-tuple $(\Sigma_g;\alpha,\beta,\gamma)$ is called a $(g;k_1,k_2,k_3)$-\textit{trisection diagram} if the following holds:
\begin{itemize}
\item $(\Sigma_g;\alpha,\beta)$ is a Heegaard diagram of $\#_{k_1}S^1 \times S^2$,
\item $(\Sigma_g;\beta,\gamma)$ is a Heegaard diagram of $\#_{k_2}S^1 \times S^2$ and 
\item $(\Sigma_g;\gamma,\alpha)$ is a Heegaard diagram of $\#_{k_3}S^1 \times S^2$.
\end{itemize}
\end{dfn}

Similar to a trisection, the type of a balanced trisection diagram is also written as $(g,k)$. Cut systems $\alpha$, $\beta$ and $\gamma$ are shown in red, blue and green, respectively. For example, Figure \ref{fig:CP^2} describes a $(1,0)$-trisection diagram of $\mathbb{C}P^2$.

\begin{figure}[h]
\begin{center}
\includegraphics[width=8cm, height=2.5cm, keepaspectratio, scale=1]{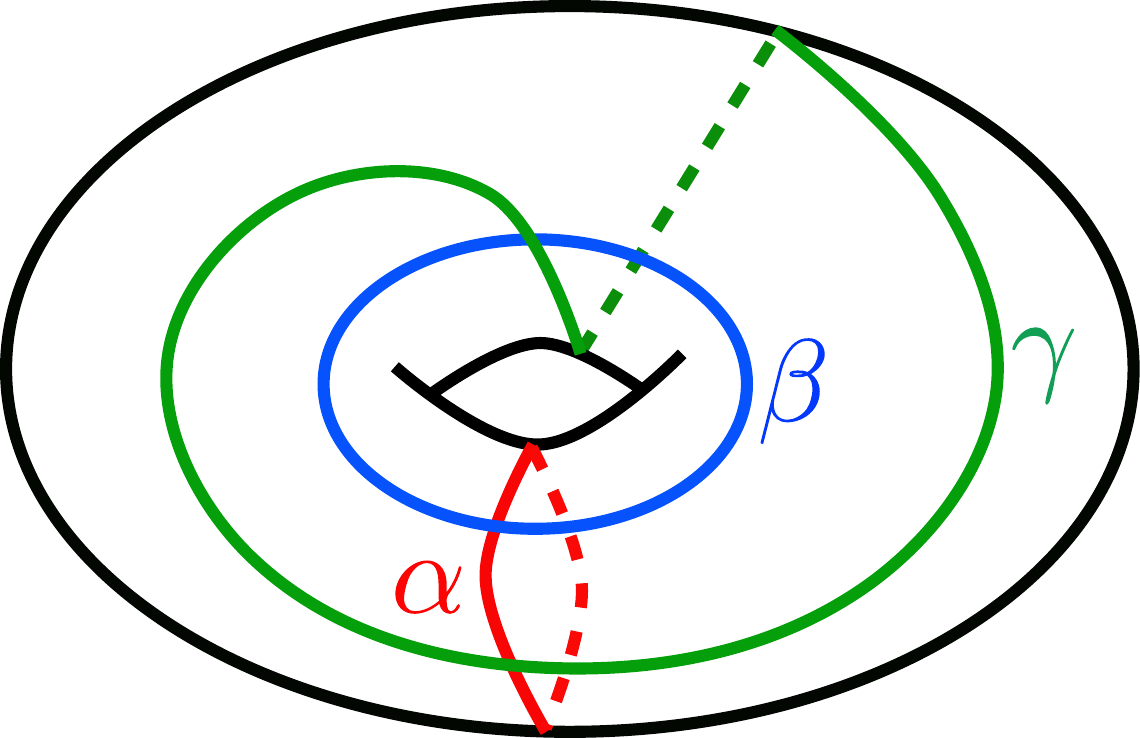}
\end{center}
\setlength{\captionmargin}{50pt}
\caption{A $(1,0)$-trisection diagram of $\mathbb{C}P^2$.}
\label{fig:CP^2}
\end{figure}

\begin{rem}\label{rem:correspond}
Given a genus $g$ trisection whose spine is $H_{\alpha} \cup H_{\beta} \cup H_{\gamma}$, let $\alpha$ (resp. $\beta$, $\gamma$) be the boundary of meridian disk systems of $H_\alpha$ (resp. $H_\beta$, $H_\gamma)$. Then, $(\Sigma_g;\alpha,\beta,\gamma)$ is the trisection diagram with respect to the trisection. Conversely, given a trisection diagram $(\Sigma_g;\alpha,\beta,\gamma)$, by attaching 2-handles to $\Sigma_g \times D^2$ along $\alpha \times \{e^{\frac{2{\pi}i}{3}}\}$, $\beta \times \{e^{\frac{4{\pi}i}{3}}\}$ and $\gamma \times \{e^{2{\pi}i}\}$ with surface framing, we can construct the trisected 4-manifold corresponding to the trisection diagram. Note that in the last process, we have only constructed the spine of the trisection since a trisection is uniquely determined by its spine \cite{LP}.
\end{rem}

\begin{dfn}\label{def:stabili}
Let $(X_1,X_2,X_3)$ be a trisection and $C_{ij}$ a boundary-parallel arc properly embedded in $X_i \cap X_j$. For $\{i,j,k\}=\{1,2,3\}$, we define $X_i^{'}$, $X_j^{'}$ and $X_k^{'}$ as follows:
\begin{itemize}
\item $X_i^{'}=X_i-\nu(C_{ij})$,
\item $X_j^{'}=X_j-\nu(C_{ij})$ and
\item $X_k^{'}=X_k \cup \overline{\nu(C_{ij})}$,
\end{itemize}
where $\nu(C_{ij})$ is a tubular neighborhood of $C_{ij}$. 
The replacement of $(X_1,X_2,X_3)$ by $(X_1^{'},X_2^{'},X_3^{'})$ is called the $k$-\textit{stabilization}. The reverse operation is called the \textit{k-destabilization}.
\end{dfn}

\begin{dfn}
A \textit{stabilization} of a trisection diagram is the connected-sum of the trisection diagram and one of the genus-1 trisection diagram of $S^4$ in Figure \ref{fig:stabilizationfordiagram}, or the trisection diagram itself obtained by the connected-sum. The reverse operation is called a \textit{destabilization}.
\end{dfn}

\begin{figure}[h]
\begin{center}
\includegraphics[width=8cm, height=3cm, keepaspectratio, scale=1]{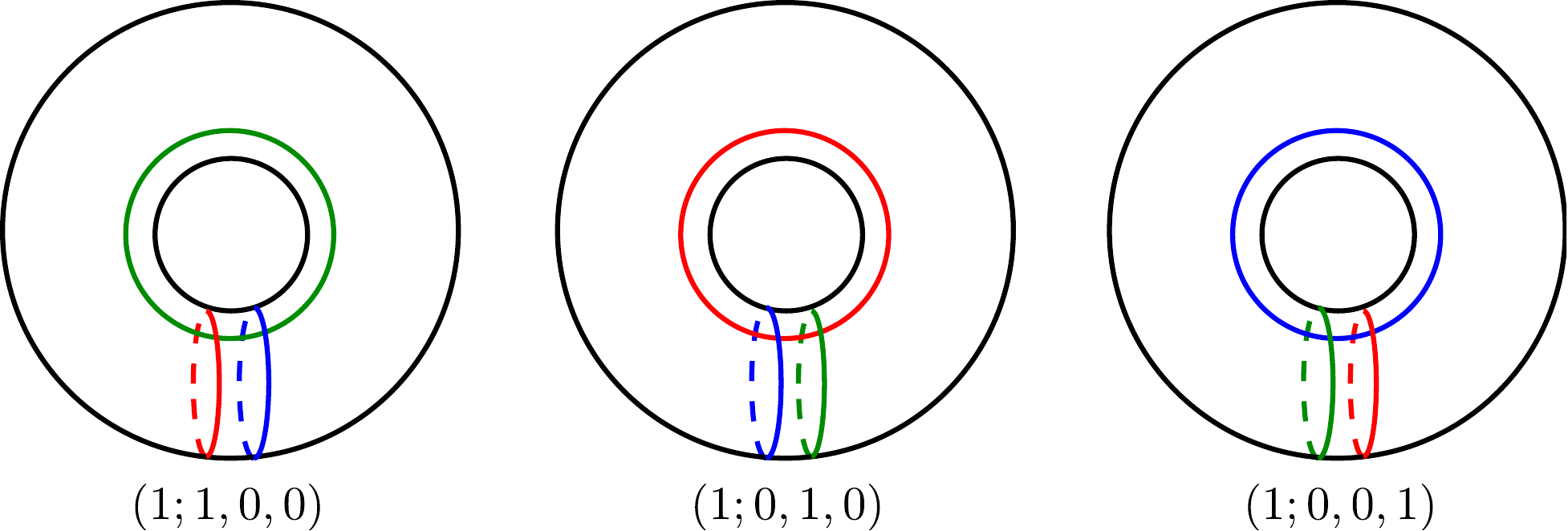}
\end{center}
\setlength{\captionmargin}{50pt}
\caption{Genus-1 trisection diagrams of $S^4$.}
\label{fig:stabilizationfordiagram}
\end{figure}

Note that these (de)stabilizations are the unbalanced one. It is obvious that if we stabilizes a $(g;k_1,k_2,k_3)$-trisection diagram by the $(1;1,0,0)$-trisection diagram (resp. $(1;0,1,0)$- and $(1;0,0,1)$-), the type of the resulting trisection diagram is $(g+1,k_1+1,k_2,k_3)$ (resp. $(g+1,k_1,k_2+1,k_3)$ and $(g+1,k_1,k_2,k_3+1)$). Similarly, the type decreases in the case of destabilizations.

\begin{thm}[{\cite[Theorem 4, Theorem 11]{MR3590351}}]
Every closed 4-manifold admits a trisection. Any two trisections of the same closed 4-manifold are isotopic after performing some number of stabilizations.
\end{thm}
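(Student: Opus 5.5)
This is the Gay--Kirby theorem, which has two parts: existence of a trisection for every closed 4-manifold, and stable uniqueness up to isotopy. I would treat them separately.

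\emph{Existence.} Start from a self-indexing Morse function $f\colon X\to[0,4]$ with a single index-$0$ and a single index-$4$ critical point. This gives a handle decomposition with $k_1$ one-handles, some two-handles, and $k_3$ three-handles. Let $X_1$ be the union of the $0$- and $1$-handles, so $X_1\cong\natural_{k_1}S^1\times D^3$. Dually, the union of the $3$- and $4$-handles is $\natural_{k_3}S^1\times D^3$.

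The middle level $f^{-1}(3/2)$ is $\#_{k_1}S^1\times S^2$, and the two-handles are attached along a framed link $L$ in it. Choose a Heegaard splitting of $\#_{k_1}S^1\times S^2$ in which $L$ lies on the Heegaard surface as a set of curves that are dual to distinct meridian disks of one handlebody. To arrange this, take a standard genus-$k_1$ splitting and stabilize it until $L$ is isotopic into a one-handlebody position: add one tunnel per crossing in a projection, and one per component. The framing of each component should equal its surface framing, which can be fixed by twisting.

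Now push a collar of the Heegaard handlebody $H_\beta$ into $X_2$. Define $X_2$ as a neighbourhood of $H_\beta$ union the two-handles, and $X_3$ as the rest. Adding the two-handles to $H_\beta\times I$ along dual curves cancels them against one-handles, so $X_2$ is still a $4$-dimensional $1$-handlebody. The third piece $X_3$ is the $3$- and $4$-handles together with a collar, so it is also a $1$-handlebody. The pairwise intersections are the handlebodies of the splittings, and the triple intersection is the Heegaard surface. Finally, stabilize to balance the parameters $k_i$ if desired.

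\emph{Uniqueness.} Given two trisections, reverse the construction. A trisection determines a handle decomposition (equivalently, a Morse function) adapted to it. By Cerf theory, any two such Morse functions are connected by a generic path consisting of births and deaths of cancelling pairs and handle slides. I would check that each elementary move can be realized, up to isotopy, by trisection stabilizations, after first stabilizing both trisections enough. The moves are:
\begin{itemize}
\item handle slides among the $1$-handles or among the $2$-handles, which become Heegaard-diagram slides on a sufficiently stabilized surface;
\item $1$--$2$ births and deaths, which correspond to the $1$-stabilization or $2$-stabilization;
\item $2$--$3$ births and deaths, dually.
\end{itemize}
The $0$--$1$ and $3$--$4$ births can be avoided by keeping the function ordered with a single extremum.

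One further point: the intermediate Heegaard splittings of $\#S^1\times S^2$ in the construction may differ. Waldhausen's theorem, together with the Reidemeister--Singer theorem, shows that such splittings become isotopic after stabilization.

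The main obstacle is the Cerf-theoretic step. Rearrangements that pass index-$2$ critical values through the middle level, and slides of $2$-handles over $2$-handles, change the link $L$ in the Heegaard surface nontrivially. Showing that these changes are absorbed by surface isotopy and stabilization needs care. I would first reduce to the case where both trisections arise from Morse functions with the same number of critical points of each index, and then handle the resulting sequence of slides diagrammatically.
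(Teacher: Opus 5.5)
\textbf{Existence.} Your existence half is the standard handle-decomposition construction, the one underlying Kepplinger's algorithm. (The paper itself only cites Gay--Kirby for this theorem.) It is essentially fine, but it needs two small repairs:
\begin{itemize}
\item Say explicitly that $L$ must be a set of cores of $H_\beta$, i.e.\ $|L_j\cap\beta_i|=\delta_{ij}$.
\item Check that $X_2\cap X_3$ is a handlebody. It is not one of the handlebodies of your original splitting: surgering $(\Sigma\times I)\cup H_\beta$ along the surface-framed $L$ gives the handlebody $H_\gamma$ with meridians $L\cup\{\beta_{\ell+1},\dots,\beta_g\}$.
\end{itemize}
Your matching of births to stabilizations is also slightly off. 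A $1$--$2$ birth is a $(1;1,0,0)$-stabilization and a $2$--$3$ birth is a $(1;0,0,1)$-stabilization. The $(1;0,1,0)$-stabilization is a Heegaard stabilization away from $L$ that leaves the handle decomposition unchanged, so ``$1$- or $2$-stabilization'' is not right.

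\textbf{Uniqueness.} This half has a genuine gap. What you call ``the main obstacle'' is the whole content of the second sentence of the theorem, and you do not carry it out. Two ingredients are missing.

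(i) A trisection amounts to a handle decomposition \emph{together with} a splitting of $\partial X_1$ adapted to $L$. Showing that every trisection arises this way already needs Waldhausen applied to $(\Sigma;\beta,\gamma)$. You then need that two adapted splittings for the same decomposition give stably isotopic trisections. Waldhausen plus Reidemeister--Singer for $\#_{k_1}S^1\times S^2$ do not give this, because they forget $L$. The isotopy they provide between stabilized splittings need not keep $L$ as surface-framed cores of $H_\beta$, so it is not an isotopy of trisections. You need a relative statement. One option is Reidemeister--Singer for the splitting of the exterior $\partial X_1\setminus\nu(L)$ into $H_\alpha$ and the compression body $H_\beta\setminus\nu(L)$. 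You would also need control over how $L$ is pushed onto $\Sigma$ with the correct framing, and a check that the stabilizations used are trisection stabilizations.

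(ii) You must show, move by move, that isotopies of $L$ in $\#_{k_1}S^1\times S^2$ and slides of $2$-handles over $1$- and $2$-handles can be tracked by adapted splittings after stabilization. ``Handle the slides diagrammatically'' is not an argument. Also, making the handle counts equal does not by itself make the Cerf path consist only of slides.

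Gay--Kirby avoid this bookkeeping by realizing trisections through Morse $2$-functions $X\to\mathbb{R}^2$. They then use Cerf theory for generic homotopies of Morse $2$-functions, matching each codimension-one event with an isotopy or a stabilization. Either supply (i) and (ii), or adopt that framework. As written, your uniqueness argument is a plan rather than a proof.
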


\begin{cor}[{\cite[Corollary 12]{MR3590351}}]\label{cor:henkei}
Two closed 4-manifolds are diffeomorphic if and only if two corresponding trisection diagrams are related by surface diffeomorphisms, handle slides among the same family curves and (de)stabilizations.
\end{cor}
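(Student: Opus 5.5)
The corollary should follow from the preceding theorem together with standard facts relating trisection diagrams to Heegaard diagrams. We treat the two directions separately.

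For the ``only if'' direction, suppose $X \cong Y$. Choose trisections of $X$ and $Y$ whose diagrams are the given ones, and transport the trisection of $Y$ to $X$ along a diffeomorphism $Y \to X$. By the preceding theorem, after some stabilizations of each, the two trisections of $X$ become isotopic. By Definition \ref{def:stabili}, a $k$-stabilization of a trisection changes its diagram by connected sum with one of the genus-$1$ diagrams of $S^4$ in Figure \ref{fig:stabilizationfordiagram}, with the correct choice of $k$ and of the pair of colours. Once the two trisections agree, Remark \ref{rem:correspond} says their spines give diagrams on the same surface. These diagrams are determined only up to the choice of meridian disk systems of $H_\alpha,H_\beta,H_\gamma$. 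Any two cut systems of a fixed handlebody are related by handle slides and isotopy; this is Johannson's theorem, or the standard Reidemeister--Singer type fact. Identifying the central surfaces via the diffeomorphism and the isotopy accounts for the surface diffeomorphisms. Together these moves relate the two diagrams.

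For the ``if'' direction, each move preserves the diffeomorphism type of the $4$-manifold built in Remark \ref{rem:correspond}. A surface diffeomorphism carries $\Sigma_g\times D^2$ and its attached $2$-handles diffeomorphically. A handle slide among curves of one family does not change the handlebody $H_\alpha$ (resp.\ $H_\beta$, $H_\gamma$) that those curves bound. Since a trisection is determined by its spine \cite{LP}, the closed manifold is unchanged. A stabilization is a connected sum with a diagram of $S^4$, so it changes $X$ to $X\# S^4\cong X$.

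The main obstacle is the ``only if'' direction: showing that isotopic trisections give diagrams related by handle slides and diffeomorphisms alone. This requires the uniqueness of cut systems for a handlebody up to slides. I would quote that uniqueness rather than reprove it.
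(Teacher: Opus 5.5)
Your argument is correct and is essentially the standard one. The paper gives no proof of its own but cites \cite[Corollary 12]{MR3590351}, where the result is derived as you do: from stable uniqueness of trisections, together with the fact that any two cut systems of a handlebody are related by handle slides and isotopy, with Laudenbach--Po\'enaru \cite{LP} giving the converse direction. A few points you assert rather than prove are standard and leave no gap: that stabilizing a trisection changes its diagram by connected sum with a genus-$1$ diagram of $S^4$, regardless of where the sum is taken up to slides, and that curves in each family are considered up to isotopy.
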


See \cite{MR3590351} for more details.

\subsection{An algorithm converting handle diagrams into trisection diagrams}

Kepplinger \cite{MR4460230} developed an algorithm that takes handle diagrams to trisection diagrams. In this subsection, we recall a simplified version of the algorithm also introduced in \cite{MR4460230}, which can make the genus of the resulting trisection diagram smaller significantly.

\begin{thm}[{\cite[Algorithm 1 and Remark 2.2]{MR4460230}}]\label{thm:algorithm}
Let $X$ be a closed orientable 4-manifold. Suppose that $X$ is described by a handle diagram with some 1-handles and a framed attaching link $L=K_1 \cup \dots \cup K_\ell$. One can obtain a trisection diagram of $X$ from this handle diagram by performing the following steps:

\begin{enumerate}
\item Convert each pair of attaching balls describing a 1-handle into a pair of disks and add a parallel pair of red and blue curves parallel to the boundary of the disk.
\item Add a +1- or (-1)-kink to an attaching circle which has no self-crossings.
\item Convert each crossing as in Figure \ref{fig:convert}. Then, one matches the framing of $K_i$ with the surface framing by winding around a handle (i.e. Dehn twisting on the red curve in Figure \ref{fig:convert}) that only $K_i$ runs. Let $g$ be the number of $\alpha$-, or equivalently, $\beta$-curves at the end of this step.
\item Handle sliding $\alpha$-curves so that $\lvert \alpha_i \cap K_j \rvert = \delta_{ij}$ ($1 \le i \le g$, $1 \le j \le \ell$).
\item Convert $K_j$ to $\gamma_j$.
\item Draw $\gamma_i$ ($\ell+1 \le i \le g$) as a curve parallel to $\alpha_i$ which does not intersect with $K_j$ ($1 \le j \le \ell$).
\end{enumerate}
\end{thm}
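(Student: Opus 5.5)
The statement to justify is Theorem \ref{thm:algorithm}: the six steps turn a handle diagram of $X$ into a trisection diagram of $X$. The approach is to read the diagram $(\Sigma_g;\alpha,\beta,\gamma)$ produced at the end as the standard trisection coming from a handle decomposition. The pair $(\Sigma_g;\alpha,\beta)$ is a Heegaard diagram of $\#_{k}S^1\times S^2$, where $k$ is the number of 1-handles. The $\gamma$-curves are attaching curves of 2-handles sitting on $\Sigma_g$ with surface framing. Beyond these, one only has to check that $(\Sigma_g;\beta,\gamma)$ and $(\Sigma_g;\gamma,\alpha)$ are Heegaard diagrams of connected sums of $S^1\times S^2$; then Remark \ref{rem:correspond} and the uniqueness of the closed-up 3- and 4-handles (Laudenbach--Po\'enaru) identify the resulting 4-manifold with $X$.

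\textbf{First: $\Sigma_g$ and the $\alpha$-, $\beta$-curves.}
\begin{itemize}
\item After step (1), the surface is the boundary of a neighbourhood of the 1-handles. Each 1-handle contributes a parallel red/blue pair, which is the standard genus-$k$ diagram of $\#_k S^1\times S^2$.
\item Steps (2)--(3) add tubes at crossings (and one kink on each crossingless component) so that the link $L$ lies on the enlarged surface $\Sigma_g$.
\item Each new tube carries a red curve and a parallel-ish blue curve meeting it once, i.e.\ a standard genus-one stabilization of the Heegaard splitting. So $(\alpha,\beta)$ still gives $\#_k S^1\times S^2$.
\item Dehn twisting along the red curve of a tube that only $K_i$ runs through changes the surface framing of $K_i$ by $\pm1$. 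This lets the framing of each component be matched to its surface framing. The kink in step (2) guarantees that such a private tube exists.
\end{itemize}

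\textbf{Second: the $\gamma$-curves.}
\begin{itemize}
\item Step (4) uses handle slides among $\alpha$ only, which do not change $H_\alpha$.
\item I would show that the tube at each crossing provides a red curve met by a given component exactly once. Then, by sliding other red curves over it, one reaches $|\alpha_i\cap K_j|=\delta_{ij}$. This is a Gaussian-elimination argument on the intersection matrix, carried out by slides.
\item With $\gamma_j=K_j$ for $j\le \ell$ and $\gamma_i$ parallel to $\alpha_i$ for $i>\ell$, the pair $(\gamma,\alpha)$ is, after slides, a diagram in which $\ell$ curves each meet one dual curve once and the remaining curves are parallel. This is a Heegaard diagram of $\#_{g-\ell} S^1\times S^2$.
\end{itemize}

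\textbf{Main obstacle: the pair $(\beta,\gamma)$.} Here one must show that this pair describes a connected sum of copies of $S^1\times S^2$. I would argue as follows.
\begin{itemize}
\item Surgery on the $\beta$-handlebody along the framed link $L=\gamma_1\cup\dots\cup\gamma_\ell$ (with surface framing) gives the boundary of the 2-handlebody of $X$.
\item Since $X$ is closed, this boundary is $\#_{m}S^1\times S^2$, where $m$ is the number of 3-handles.
\item The extra curves $\gamma_i$ for $i>\ell$ are parallel to $\alpha_i$. Because each tube's $\alpha$ and $\beta$ curves meet once, these $\gamma_i$ either cancel in pairs or give standard stabilizations.
\end{itemize}
Checking that the diagram $(\Sigma_g;\beta,\gamma)$ really represents this surgered manifold, rather than merely having the right homology, is the delicate step. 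I would handle it by tracking the 3-manifold $H_\beta\cup_{\Sigma}H_\gamma$ as $\partial$ of ($0$- and 1-handles together with the 2-handles), following Gay--Kirby's existence proof.
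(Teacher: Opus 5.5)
The paper gives no proof of this statement; it quotes it from Kepplinger. So your outline has to stand on its own. Its framework is right, and your treatment of $(\alpha,\beta)$ and $(\gamma,\alpha)$ is fine. But the step you call delicate, that $(\Sigma_g;\beta,\gamma)$ is a Heegaard diagram of the surgered manifold, is left undone, and what you do write about it points the wrong way.

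\textbf{The missing idea is a surgery lemma on the $\alpha$-side.}

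After step 4, the condition $|\alpha_i\cap K_j|=\delta_{ij}$ says exactly this: $L$, pushed slightly into $H_\alpha$, is a sublink of a core of $H_\alpha$, with $K_j$ dual to the disk bounded by $\alpha_j$. It must be pushed into $H_\alpha$, not $H_\beta$; in $H_\beta$, $L$ is in general not a core, so ``surgery on the $\beta$-handlebody'' is the wrong side.

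Dehn surgery on such a core with surface framing replaces the meridian $\alpha_j$ by the framing curve $K_j$. It leaves the disks bounded by $\alpha_{\ell+1},\dots,\alpha_g$ untouched. To see this, cut along those disks and use that surgery on the core of a solid torus along a longitude gives a solid torus with that longitude as meridian.

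Hence the surgered $H_\alpha$ is the handlebody with cut system $\{K_1,\dots,K_\ell,\alpha_{\ell+1},\dots,\alpha_g\}$, which is exactly $H_\gamma$ by steps 5 and 6. This is why steps 4 and 6 are phrased with $\alpha$. Two consequences follow.
\begin{enumerate}
\item $H_\beta\cup_\Sigma H_\gamma$ is the framed surgery on $\partial X_1=H_\alpha\cup_\Sigma H_\beta$ along $L$, with framings matched in step 3. That is the boundary of the 0-, 1-, 2-handlebody, hence $\#_m S^1\times S^2$.
\item $H_\alpha\times I$ with 2-handles along $\gamma_1,\dots,\gamma_\ell$ is the sector bounded by $H_\gamma\cup H_\alpha$. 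Gluing it to $X_1$ along $H_\alpha$ gives that 2-handlebody, which is what the identification with $X$ via Laudenbach--Po\'enaru needs.
\end{enumerate}

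Your substitute argument does not work. You claim the $\gamma_i$ with $i>\ell$ ``cancel in pairs or give standard stabilizations'' because each tube's $\alpha$ and $\beta$ meet once. But after the step-4 slides the $\alpha_i$ are band sums meeting $\beta$ in an uncontrolled way. The 1-handle $\alpha$-curves are parallel to their $\beta$-curves, not dual to them. And $k_2$ equals the number of 3-handles, a global quantity invisible in local tube data.

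\textbf{A smaller point concerns step 4.} ``Gaussian elimination on the intersection matrix'' controls only algebraic intersection numbers, whereas $|\alpha_i\cap K_j|=\delta_{ij}$ is geometric. What works is this:
\begin{enumerate}
\item For each $K_j$, fix the red curve of a tube at a self-crossing of $K_j$; step 2 guarantees one exists. $K_j$ meets it exactly once and no other component meets it.
\item Take the remaining intersection points of $K_j$ with $\alpha$ in order along $K_j$, starting next to the fixed point.
\item Slide the $\alpha$-curve through each such point over the fixed curve, along a band centred on the arc of $K_j$ back to the fixed point.
\end{enumerate}
Each slide lowers the geometric intersection with $K_j$ by one. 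It does not change the intersections with the other components, since the band is disjoint from them and the fixed curve meets only $K_j$. So the kink serves step 4, not only the framing correction.

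With these two arguments supplied, your outline becomes a complete proof.
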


\begin{figure}[h]
\begin{center}
\includegraphics[width=5cm, height=8cm, keepaspectratio, scale=1]{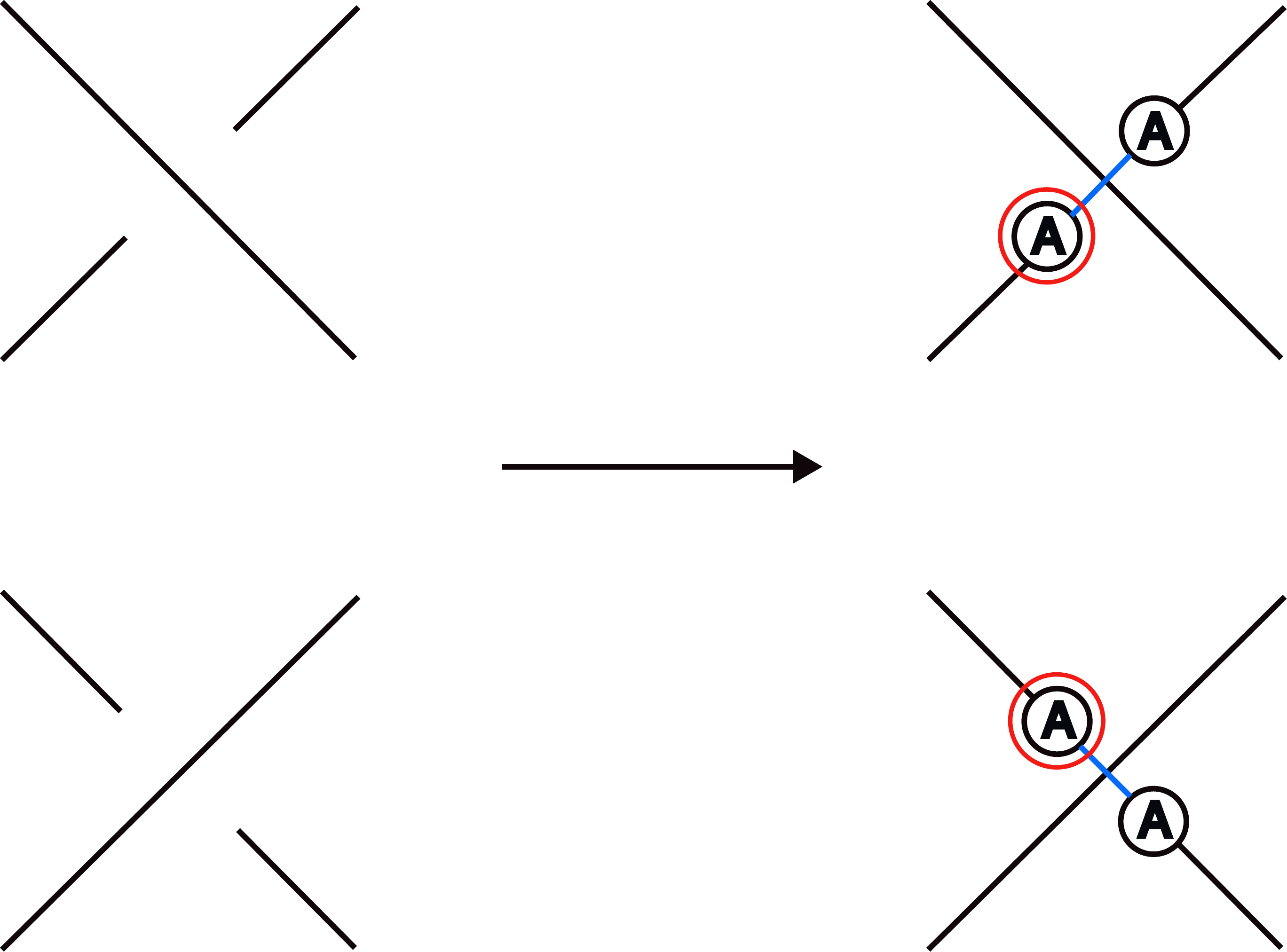}
\end{center}
\setlength{\captionmargin}{65pt}
\caption{At Step 3 in the algorithm, each crossing is converted as described in this figure.}
\label{fig:convert}
\end{figure}

\section{Main theorem}

In this section, we clarify a way to obtain an explicit minimal genus trisection diagram of the elliptic surface $E(n)$. Firstly, we construct a large genus trisection diagram of $E(n)$ from its handle diagram in Figure \ref{fig:E(n)_Kirby}, using the algorithm in Theorem \ref{thm:algorithm}.

\begin{figure}[h]
\begin{center}
\includegraphics[width=9cm, height=5cm, keepaspectratio, scale=1]{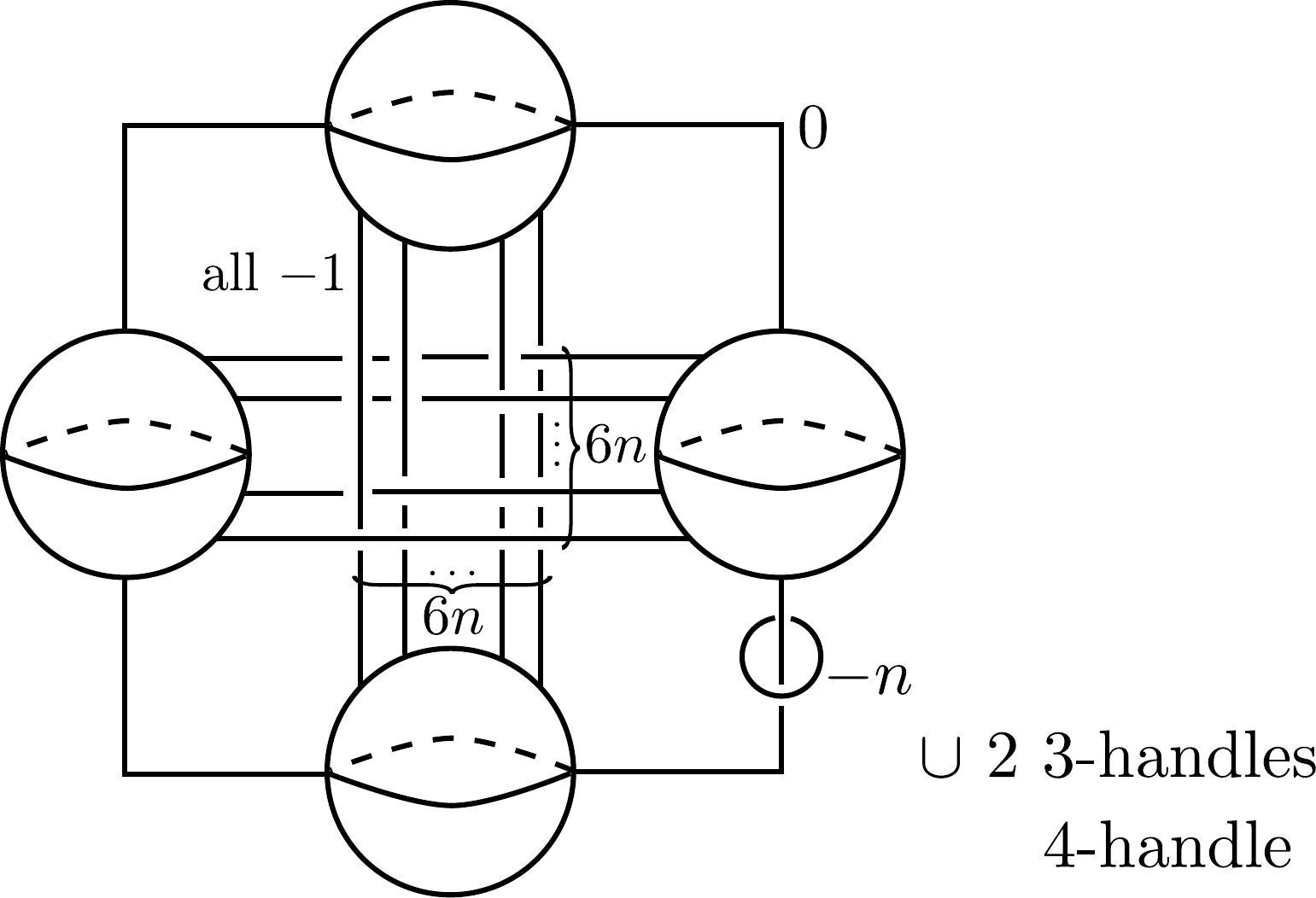}
\end{center}
\setlength{\captionmargin}{50pt}
\caption{A handle diagram of $E(n)$ obtained from its Lefschetz fibration (see \cite[Figure 8.11]{MR1707327}).}
\label{fig:E(n)_Kirby}
\end{figure}

\begin{prop}\label{prop:E(n)}
Figure \ref{fig:step6} is a $(36n^2+6n+6;2,2,36n^2-6n+4)$-trisection diagram of $E(n)$ obtained by performing the algorithm in Theorem \ref{thm:algorithm} to the handle diagram in Figure \ref{fig:E(n)_Kirby}. 
\end{prop}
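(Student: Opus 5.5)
The plan is to run the six steps of Theorem \ref{thm:algorithm} on the handle diagram of Figure \ref{fig:E(n)_Kirby} and count the curves created at each step. Correctness of the output as a trisection diagram of $E(n)$ comes from Theorem \ref{thm:algorithm}; the real content is the genus count and the three parameters $k_1,k_2,k_3$.

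\textbf{Reading off the handle diagram.} The standard Lefschetz-fibration handle diagram of $E(n)$ has the following pieces:
\begin{itemize}
\item two 1-handles, coming from the $T^2$ fiber;
\item a $0$-framed 2-handle (the fiber), and $12n$ vanishing-cycle 2-handles with framing $-1$, arranged as $6n$ copies of the pair $a,b$ of the torus;
\item two $0$-framed 2-handles and a 3- and 4-handle, closing up over the section (these may be absorbed, since the algorithm handles closed $X$).
\end{itemize}
From the figure I will record $\ell$, the number of attaching circles, and the total number $c$ of crossings in the projection. The $12n$ vanishing cycles wind around the 1-handles in a staircase. Each pair of them crosses a bounded number of times, and the crossing count grows quadratically. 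This is the source of the $36n^2$ term.

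\textbf{Counting the genus.} Step 1 contributes $2$ to the genus, one per 1-handle. Step 3 contributes one tube (one red/blue pair) per crossing, as in Figure \ref{fig:convert}, together with extra tubes from kinks. Step 2 adds kinks only to components without self-crossings. Framing corrections by Dehn twisting around a handle do not change the genus. So
\[ g = 2 + c + (\text{number of kinks}), \]
and I will show by direct counting that this equals $36n^2+6n+6$. Since each crossing is an intersection of a diagonal family of $12n$ strands, I expect the count to be a double sum, summing over the strand pairs of the $6n$-fold pattern.

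\textbf{Identifying the type.} I will compute the three parameters in turn.
\begin{itemize}
\item The pair $(\alpha,\beta)$ consists of parallel curves, except for the two 1-handle pairs. It therefore describes $\#_2 S^1\times S^2$, which gives $k_1=2$.
\item The pair $(\gamma,\alpha)$: by Steps 4--6, $\gamma_i$ is parallel to $\alpha_i$ for $i>\ell$, and $\gamma_j$ meets $\alpha_j$ once for $j\le\ell$. After handle slides this describes $\#_{g-\ell}S^1\times S^2$. So $k_3=g-\ell$, and the statement $k_3=36n^2-6n+4$ forces $\ell=12n+2$.
\item The pair $(\beta,\gamma)$ is the Heegaard diagram of the boundary of the 0-handle union the 1- and 2-handles, stabilized. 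By the handle structure this is $\#_2S^1\times S^2$, the boundary that the 3- and 4-handles cap off. This gives $k_2=2$.
\end{itemize}
As a consistency check, $\chi(E(n))=12n=2+g-k_1-k_2-k_3$ holds.

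\textbf{Main obstacle.} The difficult step is the exact crossing and kink count in the specific drawing of Figure \ref{fig:E(n)_Kirby}. My approach is to get this count by induction on $n$: compare the diagram for $n$ with the one for $n-1$, i.e.\ count the new crossings created when six more vanishing-cycle pairs are added. The result should be a difference of $72n-30$, which yields the quadratic formula. After that I will draw the resulting Figure \ref{fig:step6} carefully.
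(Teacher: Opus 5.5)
Your skeleton is the paper's: run the algorithm, read $g$, $k_1$, $k_3$ off the output, then pin down $k_2$. However, the steps where you actually derive something do not hold up.

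\textbf{Handle count and circularity.} Your inventory of Figure \ref{fig:E(n)_Kirby} lists $12n+3$ attaching circles and one 3-handle. That gives Euler characteristic $12n+2\neq 12n$. For the stated type to hold, the diagram must have $\ell=12n+2$ attaching circles and two 3-handles (plus a 4-handle). Presumably these circles are the $0$-framed curve, the $12n$ vanishing cycles with framing $-1$, and one further $(-n)$-framed 2-handle, but you must read this off the figure. With your list, your own formulas $k_3=g-\ell$ and $k_2=(\text{number of 3-handles})$ would give wrong values. You reach $\ell=12n+2$ only by solving $k_3=g-\ell$ with the value of $k_3$ you are supposed to prove, which is circular. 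The logic must run the other way: read $\ell$ off the diagram, get $k_3=g-\ell$ from $g$, and then get $k_2=2$. For $k_2$ you can use the two 3-handles or, as the paper does, $\chi(E(n))=12n=2+g-(k_1+k_2+k_3)$.

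\textbf{The argument for $k_1$.} Your justification of $k_1=2$ has the roles reversed. Step 1 produces a \emph{parallel} red/blue pair for each of the two 1-handles. Each tube created at a crossing in Step 3 carries a red and a blue curve meeting once. A diagram that is parallel except for two pairs would describe $\#_{g-2}S^1\times S^2$, not $\#_2S^1\times S^2$.

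\textbf{The genus is never computed.} Your formula $g=2+(\text{crossings})+(\text{kinks})$ is correct, so everything reduces to one fact. You must show that the projection in Figure \ref{fig:E(n)_Kirby}, after Step 2, has exactly $36n^2+6n+4$ crossings including kinks. You never count this. The increment $72n-30$ is just the first difference of the target polynomial $36n^2+6n+6$. There is no base case, and nothing ties the increment to the drawing.

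Induction on $n$ is a reasonable way to organize the count, but you would have to do the following from the actual picture:
\begin{itemize}
\item determine how many new crossings the six added pairs of vanishing cycles create, among themselves, with the existing vanishing cycles, and with the other attaching circles;
\item decide which components lack self-crossings and so receive a kink;
\item check a base case such as $n=1$ (genus $48$) directly.
\end{itemize}
This count is exactly what the paper's proof delegates to the explicit Figures \ref{fig:step1}--\ref{fig:step6}.
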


\begin{proof}

Step 1 in the algorithm convert Figure \ref{fig:E(n)_Kirby} into Figure \ref{fig:step1}. Then, Figure \ref{fig:step2,3} is obtained from Figure \ref{fig:step1} by Steps 2 and 3 in the algorithm. After that, Steps 4 and 5 in the algorithm yields Figure \ref{fig:step4,5}. Finally, we have Figure \ref{fig:step6} from Step 6 in the algorithm, which is a trisection diagram of $E(n)$. Let $(g;k_1,k_2,k_3)$ be the type of this trisection diagram. One can check by Definition \ref{def:trisection diagram} that $g=36n^2+6n+6$, $k_1=2$ and $k_3=36n^2-6n+4$. It is known \cite{MR3590351} that for a closed 4-manifold $X$ admitting a $(g';k_1',k_2',k_3')$-trisection, $\chi(X)=2+g'-(k_1'+k_2'+k_3')$, where $\chi(X)$ is the Euler characteristic of $X$. Since $\chi(E(n))=12n$, we have $k_2=2$.
\end{proof}

\begin{figure}[h]
\begin{center}
\includegraphics[width=9cm, height=5cm, keepaspectratio, scale=1]{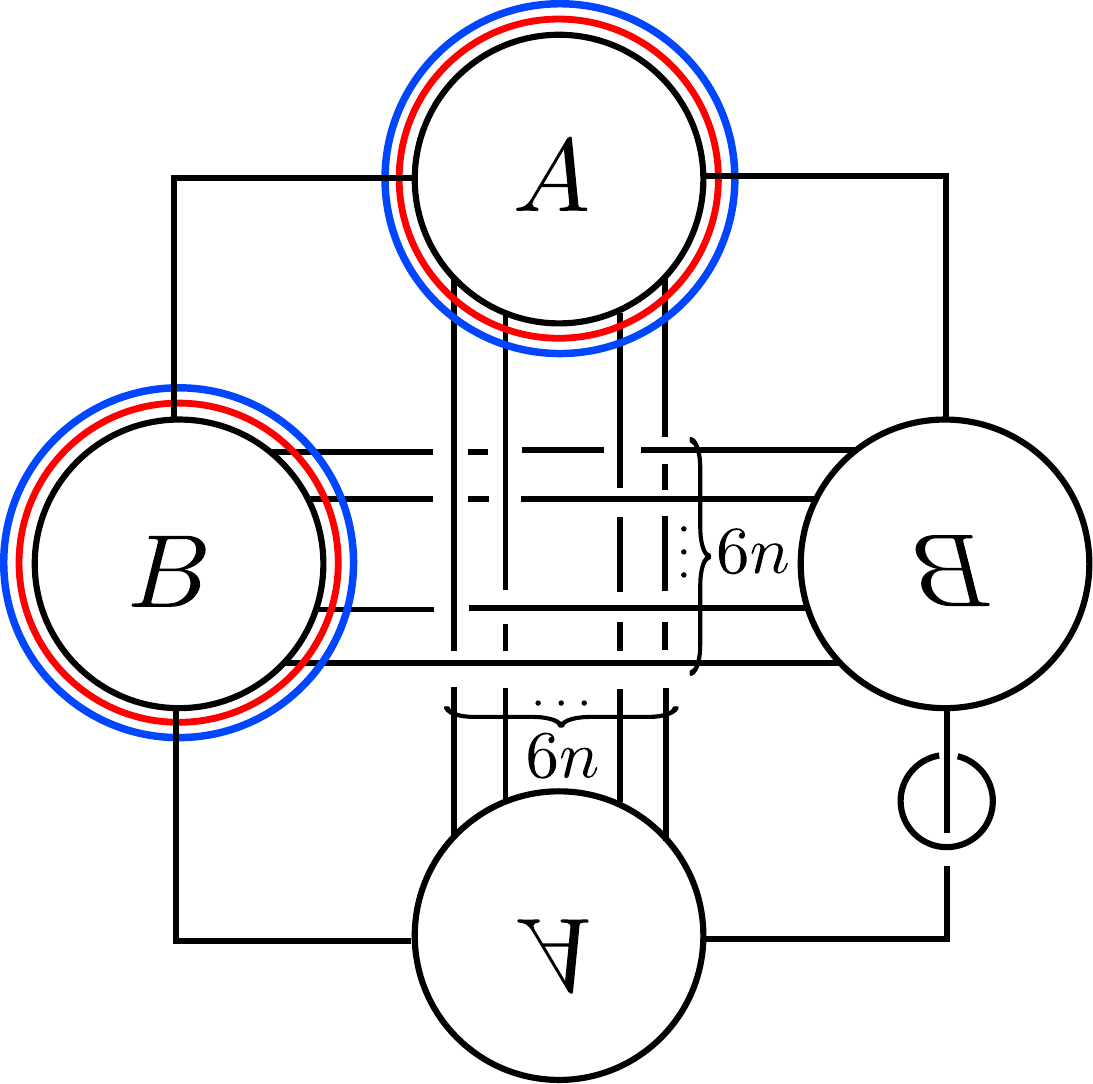}
\end{center}
\setlength{\captionmargin}{50pt}
\caption{After Step1 in the algorithm in Theorem \ref{thm:algorithm}.}
\label{fig:step1}
\end{figure}

\begin{figure}[h]
\begin{center}
\includegraphics[width=12cm, height=15cm, keepaspectratio, scale=1]{step2,3.pdf}
\end{center}
\setlength{\captionmargin}{50pt}
\caption{After Steps 2 and 3 in the algorithm in Theorem \ref{thm:algorithm}. The black $-1$ and $-n$ omit the Dehn twist for $\gamma$-curves as in Figure \ref{fig:notation}.}
\label{fig:step2,3}
\end{figure}

\begin{figure}[h]
\begin{center}
\includegraphics[width=11cm, height=15cm, keepaspectratio, scale=1]{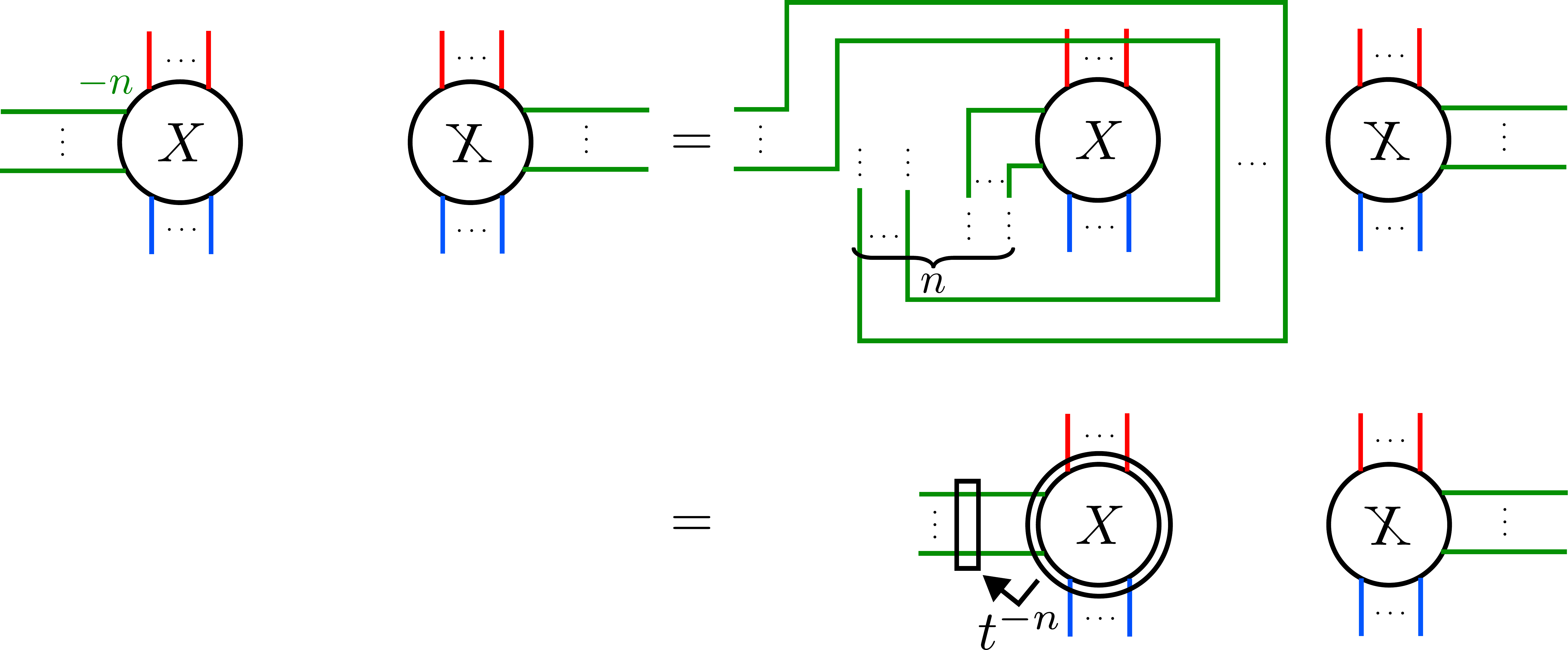}
\end{center}
\setlength{\captionmargin}{45pt}
\caption{Notations in Figures \ref{fig:step2,3} and \ref{fig:god2} for non-negative integers $n$. The bottom figure describes that we perform the Dehn twist $t^{-n}$ along the black curve for all $\gamma$-curves intersecting with the black curve (labeled the rectangle).}
\label{fig:notation}
\end{figure}

\begin{figure}[h]
\begin{center}
\includegraphics[width=12cm, height=15cm, keepaspectratio, scale=1]{step4,5.pdf}
\end{center}
\setlength{\captionmargin}{50pt}
\caption{After Steps 4 and 5 in the algorithm in Theorem \ref{thm:algorithm}.}
\label{fig:step4,5}
\end{figure}

\begin{figure}[h]
\begin{center}
\includegraphics[width=12cm, height=15cm, keepaspectratio, scale=1]{step6.pdf}
\end{center}
\setlength{\captionmargin}{35pt}
\caption{After Step 6 in the algorithm in Theorem \ref{thm:algorithm}. This is a $(36n^2+6n+6;2,2,36n^2-6n+4)$-trisection diagram of $E(n)$.}
\label{fig:step6}
\end{figure}

Here is the main theorem mentioned in Section \ref{sec:intro}.

\begin{thm}\label{thm:main theorem}
A $(12n-2,0)$-trisection diagram of $E(n)$ can be obtained from a $(36n^2+6n+6;2,2,36n^2-6n+4)$-trisection diagram of $E(n)$ in Figure \ref{fig:step6} by handle slides and destabilizations. 
\end{thm}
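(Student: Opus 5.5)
The plan is to bookkeep the type $(g;k_1,k_2,k_3)$ through a sequence of handle slides and destabilizations, all justified by Corollary \ref{cor:henkei}. We start from the diagram in Figure \ref{fig:step6}, of type $(36n^2+6n+6;2,2,36n^2-6n+4)$. We must remove $36n^2-6n+8$ from the genus. Removing all of $k_3$ and then all of $k_1,k_2$ would give genus $(36n^2+6n+6)-(36n^2-6n+4)-4=12n-2$ with $k=0$, which matches $\chi(E(n))=2+g-(k_1+k_2+k_3)=12n$.

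\textbf{Step 1: removing $k_3$.} The first and largest step removes essentially all of $k_3$ by $3$-destabilizations, that is, by cancelling $(1;0,0,1)$ summands. The source of $k_3$ is Step 6 of Theorem \ref{thm:algorithm}. For each index $i\ge \ell+1$, the curve $\gamma_i$ was drawn parallel to $\alpha_i$, so the pair $(\gamma_i,\alpha_i)$ already looks like a $(1;0,0,1)$ summand. It remains to split off a genus-one summand in which the $\beta$-curve meets $\alpha_i=\gamma_i$ exactly once.

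For the $\alpha_i$ coming from crossing conversions (Figure \ref{fig:convert}), each such tube carries a $\beta$-curve dual to $\alpha_i$. After handle slides, we want this $\beta$-curve to meet only $\alpha_i$ and $\gamma_i$, each once. We achieve this by sliding other $\beta$-curves, and the $\gamma_j$ ($j\le \ell$) running over that tube, off of it. Here we use the Dehn-twist notation of Figure \ref{fig:notation} to track the $-1$ and $-n$ twist regions.

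We process the $O(n^2)$ crossings region by region. The crossings occur in the repeated blocks coming from the $12n$ vanishing cycles. In each block we expect a local pattern of slides that destabilizes all tubes but a bounded number. This should bring the diagram to a genus of order $n$ with small $k_i$. Presumably this stage ends at the intermediate $(12n;0,2,0)$ diagram of Figure \ref{fig:after_destab4}, after $1$-destabilizations have also removed the two $k_1$ summands coming from the 1-handle pairs of Step 1.

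\textbf{Step 2: removing the remaining $k_2$.} From $(12n;0,2,0)$ we need two $2$-destabilizations, so we must exhibit two $(1;0,1,0)$ summands. These come from the two 1-handles of the handle diagram, the $S^1\times S^1$ part of the fiber neighborhood. There the red and blue parallel pairs from Step 1 pair with green curves that, after slides, become dual to them. We locate, after $\gamma$-slides, two tubes where $\beta=\gamma$ and $\alpha$ meets them once, and then destabilize. The result is genus $12n-2$ with $k_1=k_2=k_3=0$, confirmed by the Euler characteristic count.

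\textbf{Main obstacle.} The hard part is the large-scale slide sequence in Step 1. The $\gamma_j$ for $j\le\ell$, the images of the $12n$ vanishing cycles and the section, wind through many crossing tubes with twisting. We must show that sliding them off all but $O(n)$ tubes is possible uniformly in $n$. The approach is to find a periodic local move on one block of vanishing cycles, handled by induction on the block index, and to handle the $-n$ framed section curve separately at the end.
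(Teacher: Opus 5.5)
Your bookkeeping of types and your order of removal match the paper: you remove $k_3$, then $k_1$ to reach the $(12n;0,2,0)$ diagram of Figure~\ref{fig:after_destab4}, then $k_2$. But the proposal has a genuine gap exactly where the theorem's real content lies, namely the two final $2$-destabilizations. Step 2 first confuses summand types. Red and blue curves that are parallel, with a green curve dual to them, form a $(1;1,0,0)$ summand. That lowers $k_1$, and you already spent those in Step 1. A $(1;0,1,0)$ summand needs a $\beta$-curve parallel to a $\gamma$-curve, with an $\alpha$-curve dual to both. Nothing in the output of Theorem~\ref{thm:algorithm} supplies such a pair directly. There, $k_2=2$ only records the global fact that $(\Sigma;\beta,\gamma)$ is a Heegaard diagram of $\#_2S^1\times S^2$, the piece capped off by the two $3$-handles. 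So your sentence ``we locate, after $\gamma$-slides, two tubes where $\beta=\gamma$'' is precisely the statement that needs proof, and you give no mechanism for it.

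The paper's missing ingredient works as follows.
\begin{enumerate}
\item After explicit slides (Figures~\ref{fig:god1.1}--\ref{fig:last3}), the pairs $\beta_i,\gamma_i$ labeled $1,\dots,12n-4,C,D$ can be destabilized one after another in the Heegaard diagram $(\Sigma;\beta,\gamma)$ alone. This leaves a genus-$2$ diagram of $\#_2S^1\times S^2$.
\item Waldhausen's uniqueness of Heegaard splittings of $\#_nS^1\times S^2$ then gives handle slides making $\beta$ parallel to $\gamma$ in that genus-$2$ diagram.
\item Lemma~\ref{lem:destabilization2} lifts these slides back to the genus-$12n$ surface, by sliding over the destabilized $\beta_i$. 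This gives $\beta_{12n-3}\parallel\gamma^s_\ast$ and $\beta_{12n-2}\parallel\gamma^s_{\ast\ast}$.
\item One checks that $\alpha_1$ and $\alpha_2$ meet these curves once each, so the two $2$-destabilizations exist.
\end{enumerate}
Without this argument, or an explicit slide sequence replacing it, your Step 2 does not go through.

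You have also put the main obstacle in the wrong place. The bulk reduction to the $(12n+4,2)$ diagram (Lemma~\ref{lem:destab1}) is obtained directly by destabilizing the parallel $\alpha$/$\gamma$ pairs against the central $\beta$-curves, so no inductive periodic move is needed there. The real slide work goes into three things:
\begin{itemize}
\item the last two $3$-destabilizations;
\item the two $1$-destabilizations at disks $B$ and $A$, which only become available after sliding the two complicated $\alpha$-curves over the $\alpha$-curves on disks $1,\dots,12n-2,C,D$ and sliding $\gamma$-curves;
\item the preparation of the $(\beta,\gamma)$ reduction described above.
\end{itemize}
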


We prove Theorem \ref{thm:main theorem} by destabilizing the trisection diagram in Figure \ref{fig:step6} and reducing its genus. Note that handle slides and destabilizations do not change the diffeomorphism type of the 4-manifold (Corollary \ref{cor:henkei}). Destabilizations in the following proofs are based on \cite[Lemma 8]{MR4480889}. 

\begin{lem}\label{lem:destab1}
A $(12n+4,2)$-trisection diagram of $E(n)$ shown in Figure \ref{fig:destab1} is obtained from Figure \ref{fig:step6} by handle slides and destabilizations.
\end{lem}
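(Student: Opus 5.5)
The approach is to count what has to be removed and then peel it off one genus at a time using the destabilization criterion of \cite[Lemma 8]{MR4480889}. Comparing types, passing from $(36n^2+6n+6;2,2,36n^2-6n+4)$ to $(12n+4;2,2,2)$ lowers $g$ by $36n^2-6n+2$ and lowers $k_3$ by exactly the same amount, while $k_1$ and $k_2$ are unchanged. So I expect every step to be a destabilization of type $(1;0,0,1)$, i.e.\ the removal of a summand in which an $\alpha$-curve and a $\gamma$-curve are parallel and a $\beta$-curve meets them geometrically once.

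First I will identify the candidate summands. The handle diagram in Figure~\ref{fig:E(n)_Kirby} has two 1-handles and $\ell=12n+2$ attaching circles: the $12n$ vanishing cycles plus the two 0-framed circles giving $T^2\times D^2$. This is consistent with $\chi(E(n))=1-2+(12n+2)-2+1=12n$. Hence Step~6 of Theorem~\ref{thm:algorithm} produces $g-\ell=36n^2-6n+4$ curves $\gamma_i$ that are parallel to $\alpha_i$ and disjoint from the $K_j$. Exactly two of these come from the parallel red/blue pairs created in Step~1 for the 1-handles; these pairs carry the $k_1=2$ summands and should not be destabilized. The remaining $36n^2-6n+2$ pairs $(\alpha_i,\gamma_i)$ sit on the tubes created at the crossings in Step~3 (Figure~\ref{fig:convert}), together with the tubes coming from the kinks. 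On each such tube there is a $\beta$-curve (the meridian of the tube) that meets $\alpha_i$, hence $\gamma_i$, exactly once. The count $36n^2-6n+2$ matches the required genus drop exactly.

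Next I will put each such summand into the standard form required by \cite[Lemma 8]{MR4480889}. After the Step~4 slides, the $\beta$-meridian $b_i$ of the tube may meet other $\alpha$-curves, and $\alpha_i$ (and $\gamma_i$) may meet other $\beta$-curves. My plan is:
\begin{itemize}
\item slide every other $\beta$-curve crossing $\alpha_i$ over $b_i$, which is possible since $|\alpha_i\cap b_i|=1$;
\item slide every other $\alpha$-curve crossing $b_i$ over $\alpha_i$;
\item slide every other $\gamma$-curve crossing $b_i$ over $\gamma_i$.
\end{itemize}
Because $\gamma_i$ is parallel to $\alpha_i$, these slides are compatible, and afterwards the triple $(\alpha_i,b_i,\gamma_i)$ is isolated in a punctured torus. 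That punctured torus is the $(1;0,0,1)$ summand of Figure~\ref{fig:stabilizationfordiagram}, so it can be destabilized. I will do this crossing by crossing, working through the twist boxes of Figure~\ref{fig:notation}, and show that the local picture is the same for every crossing of the $-1$ and $-n$ twist regions. This lets the $36n^2-6n+2$ destabilizations be carried out uniformly rather than individually.

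Finally I will simplify what remains to Figure~\ref{fig:destab1}: a genus $12n+4$ surface carrying the two 1-handle pairs and the $12n+2$ curves $\gamma_j$ coming from the $K_j$. I will check that the type is $(12n+4,2)$ either directly or via $\chi=2+g-(k_1+k_2+k_3)$.

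The main obstacle is the second step. The $\gamma$-curves that pass through the Dehn-twisted regions, where the framing was matched by winding, may cross $b_i$ many times. One must verify that sliding them over $\gamma_i$ actually clears them off, and does not merely transfer the intersections to neighbouring tubes. I would first try to order the destabilizations from the innermost crossings of each twist box outward, so that each slide only introduces intersections with tubes that have already been removed.
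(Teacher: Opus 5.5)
Your plan is the paper's argument. Figure~\ref{fig:destab1} is obtained from Figure~\ref{fig:step6} by $36n^2-6n+2$ destabilizations of type $(1;0,0,1)$, each using a Step~6 pair of parallel $\alpha$- and $\gamma$-curves and a central $\beta$-curve meeting them once (Figure~\ref{fig:destabilization}), and your slide-off step is exactly the criterion \cite[Lemma 8]{MR4480889} that the paper invokes, so clearing other curves off that $\beta$-curve is not an obstacle in itself.

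One bookkeeping claim should be dropped, though it does not affect the strategy. The two pairs you leave alone cannot be Step~1 red/blue pairs that also picked up a parallel $\gamma$-curve: a curve common to all three cut systems would make $\pi_1(E(n))$ surject onto $\mathbb{Z}$. In the paper, the two remaining $k_3$-summands are destabilized only after further $\alpha$- and $\gamma$-slides in Lemma~\ref{lem:destab2}, and the $k_1$-summands only in Lemma~\ref{lem:destab3}.
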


\begin{proof}

In Figure \ref{fig:step6}, we can take many destabilizations for the parallel $\alpha$- and $\gamma$-curves, and $\beta$-curves in the center of the figure as in Figure \ref{fig:destabilization}. By these destabilizations, we have Figure \ref{fig:destab1}, which is a $(12n+4,2)$-trisection diagram of $E(n)$. Note that we rename the labels of the disks in Figure \ref{fig:destab1}.
\end{proof}

\begin{figure}[h]
\begin{center}
\includegraphics[width=12cm, height=15cm, keepaspectratio, scale=1]{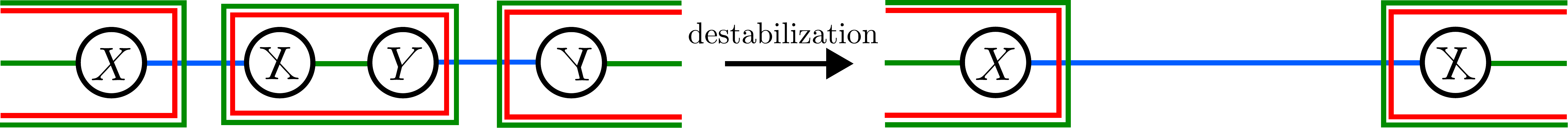}
\end{center}
\setlength{\captionmargin}{43pt}
\caption{A destabilization performed from Figure \ref{fig:step6} to Figure \ref{fig:destab1}.}
\label{fig:destabilization}
\end{figure}

\begin{figure}[h]
\begin{center}
\includegraphics[width=12cm, height=15cm, keepaspectratio, scale=1]{destab1.pdf}
\end{center}
\setlength{\captionmargin}{50pt}
\caption{A $(12n+4,2)$-trisection diagram of $E(n)$ obtained from Figure \ref{fig:step6} by destabilizations.}
\label{fig:destab1}
\end{figure}

In the following proofs, we indicate the choice of handle slides by arrows as in Figure \ref{fig:handle slide2}.

\begin{figure}[h]
\begin{center}
\includegraphics[width=8cm, height=15cm, keepaspectratio, scale=1]{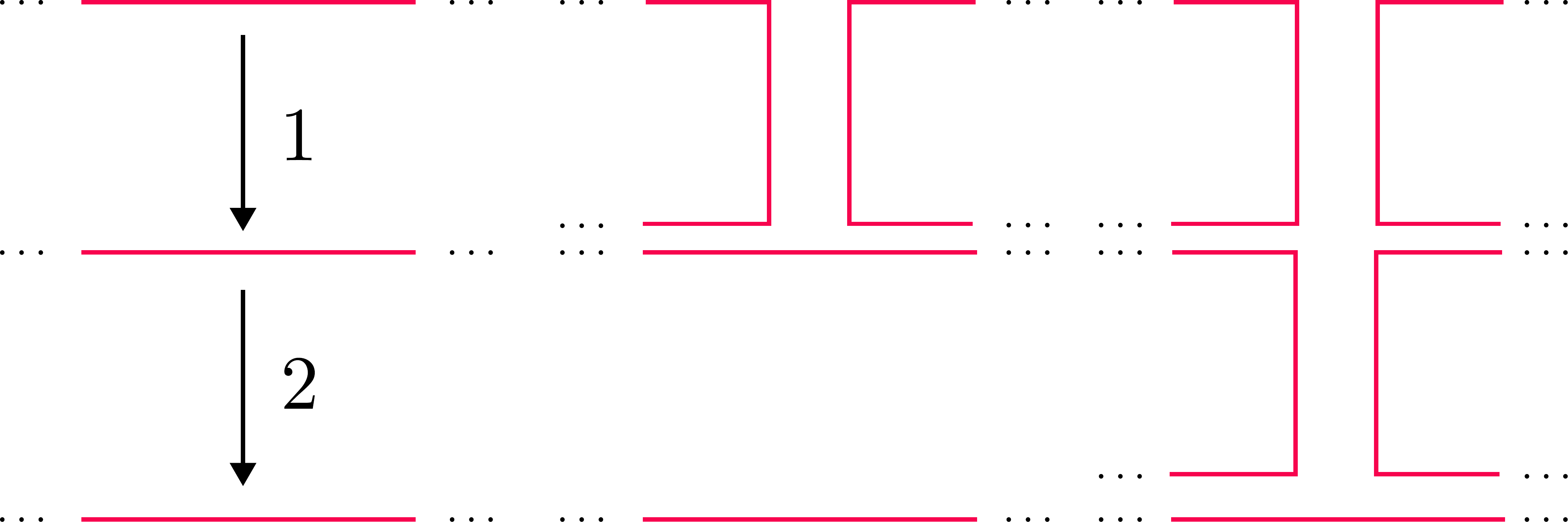}
\end{center}
\setlength{\captionmargin}{50pt}
\caption{In this paper, handle slides are indicated by arrows as in this figure. If it is necessary to specify the order of handle slides, we represent them by labeling arrows as in this figure. Namely, the most left figure means that firstly we perform the handle slide labeled 1 (the central figure), and then we perform the handle slide labeled 2 (the most right figure).}
\label{fig:handle slide2}
\end{figure}

\begin{lem}\label{lem:destab2}
A $(12n+2;2,2,0)$-trisection diagram of $E(n)$ shown in Figure \ref{fig:after_destab2} is obtained from Figure \ref{fig:destab1} by handle slides and destabilizations.
\end{lem}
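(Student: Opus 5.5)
We go from type $(12n+4;2,2,2)$ in Figure \ref{fig:destab1} to type $(12n+2;2,2,0)$. Genus drops by $2$ and only $k_3$ (the $(\gamma,\alpha)$ pair) drops, by $2$. So the plan is to perform exactly two destabilizations, each removing a copy of the $(1;0,0,1)$-trisection diagram of $S^4$ in Figure \ref{fig:stabilizationfordiagram}. As a check, $\chi=2+g-(k_1+k_2+k_3)$ stays $12n$ throughout. Concretely, in each step we look for a pair consisting of a $\gamma$-curve and an $\alpha$-curve that are parallel (isotopic in $\Sigma$). We also need a $\beta$-curve meeting that common parallel class in exactly one point, and disjoint from all other $\alpha$- and $\gamma$-curves after slides. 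Following \cite[Lemma 8]{MR4480889}, such a configuration spans a summand we can cut off, decreasing $g$ and $k_3$ by one each.

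\textbf{Locating the summands.} The $k_3=2$ in Figure \ref{fig:destab1} comes from the two $1$-handles of the handle diagram in Figure \ref{fig:E(n)_Kirby}. Step 1 of Theorem \ref{thm:algorithm} produced a parallel red--blue pair around each pair of attaching disks. In Step 6 the remaining $\gamma$-curves were drawn parallel to $\alpha$-curves. So the two independent classes in $H_1$ of the $(\gamma,\alpha)$ Heegaard diagram should be carried by the regions near these two $1$-handle disks. First, for each $1$-handle, we slide $\gamma$-curves over one another, following the $2$-handles running over that $1$-handle. This simplifies the $\gamma$-curve that is geometrically tied to the $1$-handle: in the handle diagram the fiber $2$-handles cancel the $1$-handles after slides, since $E(n)$ is simply connected. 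The goal is to make it isotopic to the red curve $\alpha$ encircling the disk. Second, we slide $\beta$-curves, in the order indicated by arrows as in Figure \ref{fig:handle slide2}, until a single $\beta$-curve meets this $\alpha=\gamma$ class once and meets nothing else of $\alpha\cup\gamma$. Then we destabilize. We repeat this for the second $1$-handle, renaming the disks afterwards, and obtain Figure \ref{fig:after_destab2}.

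\textbf{Main obstacle.} The hard part is the $\gamma$-slides. The $\gamma$-curves coming from the $2$-handles pass through the $1$-handle disks many times, and there are $n$-fold twist boxes (Figure \ref{fig:notation}). We need to find slides that remove every extra intersection of the chosen $\gamma$-curve with the rest of the configuration, uniformly in $n$. I would first try to imitate the handle-calculus cancellation of the $1$-handles against the vanishing-cycle $2$-handles of Figure \ref{fig:E(n)_Kirby}. A handle slide of $2$-handles in the Kirby diagram translates to a $\gamma$-slide along a band on $\Sigma$. Provided the band can be chosen in $\Sigma$ avoiding the twist regions, this reduces the verification to checking a fixed local picture that is independent of $n$.

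To confirm the result, we read off the type of the final diagram: $(\Sigma;\alpha,\beta)$ and $(\Sigma;\beta,\gamma)$ are still Heegaard diagrams of $\#_2 S^1\times S^2$, since the destabilized summands contribute nothing to $k_1$ or $k_2$. The remaining diagram $(\Sigma;\gamma,\alpha)$ is then a Heegaard diagram of $S^3$, so the type is $(12n+2;2,2,0)$. By Corollary \ref{cor:henkei}, the diagram still describes $E(n)$.
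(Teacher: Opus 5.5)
\textbf{There is a genuine gap: the summands you target cannot be destabilized.} Your type bookkeeping is right. You need two destabilizations of type $(1;0,0,1)$, each using a parallel $\alpha$--$\gamma$ pair and a $\beta$-curve meeting it once. That is what the paper does: it slides $\alpha$- and $\gamma$-curves in Figure \ref{fig:destab1}, destabilizes at the disk labeled $12n$ to reach type $(12n+3;2,2,1)$, slides again, and destabilizes at the disk labeled $12n-1$.

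The problem is where and how you look for these summands. The Step 1 red--blue pairs account for $k_1$, not $k_3$: they are what make $(\Sigma;\alpha,\beta)$ a diagram of $\#_2S^1\times S^2$. In the algorithm's output, $k_3=g-\ell$ counts the Step 6 pairs $\gamma_i\parallel\alpha_i$ with $i>\ell$. Figure \ref{fig:destab1} still has $(12n+4)-(12n+2)=2$ of these.

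Your concrete target also cannot work. The curve $c$ encircling a 1-handle disk is isotopic to the blue curve that Step 1 placed beside it, so $[c]$ lies in the span $L_\beta\subset H_1(\Sigma)$ of the $\beta$-curves. That span is isotropic and unchanged by $\beta$-slides. Hence, after any sequence of slides, every $\beta$-curve has algebraic intersection $0$ with $c$. No $\beta$-curve can ever meet an $\alpha=\gamma$ class isotopic to $c$ exactly once, so the destabilization you describe never becomes available.

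\textbf{Your handle-calculus heuristic targets the wrong invariant.} Cancelling a 1-handle against a 2-handle that runs over it once appears on the diagram as a $\gamma$-curve meeting the parallel $\alpha$--$\beta$ pair of that 1-handle \emph{once}. That is a $(1;1,0,0)$ summand, and it lowers $k_1$. This is the kind of destabilization the paper performs only later, in Lemma \ref{lem:destab3}, at the disks $B$ and $A$.

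For the present lemma you need summands whose $\alpha=\gamma$ class pairs nontrivially with $L_\beta$. The paper obtains these at the disks labeled $12n$ and $12n-1$, by sliding only $\alpha$- and $\gamma$-curves; no $\beta$-slides are needed. Finally, your proposal never exhibits the slides: the uniform-in-$n$ verification is left as something you ``would try''. So even with the correct target it is a plan rather than a proof.
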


\begin{proof}

In Figure \ref{fig:destab1}, by sliding $\alpha$- and $\gamma$-curves as arrows in Figure \ref{fig:destab1}, we have Figure \ref{fig:before_destab1}. Then, by destabilizing the disk labeled $12n$ in Figure \ref{fig:before_destab1}, Figure \ref{fig:after_destab1} is obtained. After that, in Figure \ref{fig:after_destab1}, by sliding $\alpha$- and $\gamma$-curves as arrows in Figure \ref{fig:after_destab1}, we have Figure \ref{fig:before_destab2}. Then, by destabilizing the disk labeled $12n-1$ in Figure \ref{fig:before_destab2}, Figure \ref{fig:after_destab2} is obtained.
\end{proof}

\begin{figure}[h]
\begin{center}
\includegraphics[width=12cm, height=15cm, keepaspectratio, scale=1]{before_destab1.pdf}
\end{center}
\setlength{\captionmargin}{50pt}
\caption{A $(12n+4,2)$-trisection diagram of $E(n)$ obtained from Figure \ref{fig:destab1} by the handle slides.}
\label{fig:before_destab1}
\end{figure}

\begin{figure}[h]
\begin{center}
\includegraphics[width=12cm, height=15cm, keepaspectratio, scale=1]{after_destab1.pdf}
\end{center}
\setlength{\captionmargin}{50pt}
\caption{A $(12n+3;2,2,1)$-trisection diagram of $E(n)$ obtained from Figure \ref{fig:before_destab1} by the destabilization.}
\label{fig:after_destab1}
\end{figure}

\begin{figure}[h]
\begin{center}
\includegraphics[width=12cm, height=15cm, keepaspectratio, scale=1]{before_destab2.pdf}
\end{center}
\setlength{\captionmargin}{50pt}
\caption{A $(12n+3;2,2,1)$-trisection diagram of $E(n)$ obtained from Figure \ref{fig:after_destab1} by the handle slides.}
\label{fig:before_destab2}
\end{figure}

\begin{figure}[h]
\begin{center}
\includegraphics[width=12cm, height=15cm, keepaspectratio, scale=1]{after_destab2.pdf}
\end{center}
\setlength{\captionmargin}{50pt}
\caption{A $(12n+2;2,2,0)$-trisection diagram of $E(n)$ obtained from Figure \ref{fig:before_destab2} by the destabilization.}
\label{fig:after_destab2}
\end{figure}

\begin{lem}\label{lem:destab3}
A $(12n;0,2,0)$-trisection diagram of $E(n)$ shown in Figure \ref{fig:after_destab4} is obtained from Figure \ref{fig:after_destab2} by handle slides and destabilizations.
\end{lem}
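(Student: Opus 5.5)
The passage from Figure \ref{fig:after_destab2} to Figure \ref{fig:after_destab4} will take two more destabilizations, and both will be $1$-destabilizations. The type changes from $(12n+2;2,2,0)$ to $(12n;0,2,0)$, so $g$ drops by $2$, $k_1$ drops by $2$, and $k_2,k_3$ are unchanged. This is consistent with $\chi(E(n))=2+g-(k_1+k_2+k_3)=12n$. So I will follow the same pattern as in Lemma \ref{lem:destab2}, but remove two $(1;1,0,0)$-summands, in which the $\alpha$- and $\beta$-curves are parallel and the $\gamma$-curve meets them once.

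Since $k_1=2$, the pair $(\Sigma;\alpha,\beta)$ in Figure \ref{fig:after_destab2} is a Heegaard diagram of $\#_2 S^1\times S^2$. The two $1$-handles of the original handle diagram in Figure \ref{fig:E(n)_Kirby} are the source of these two $S^1\times S^2$ summands: Step 1 of Theorem \ref{thm:algorithm} produced the parallel red/blue pairs around the attaching disks. After the earlier slides these pairs are probably still nearly parallel, but each is crossed by $\gamma$-curves coming from the 2-handles that run over the 1-handles. The 1-handles of $E(n)$ are cancelled by 2-handles (a fiber-section argument), so for each pair there should be a $\gamma$-curve running geometrically once over the corresponding disk.

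\textbf{Step 1.} In Figure \ref{fig:after_destab2}, I will slide the other $\gamma$-curves off the first of these disks over the distinguished $\gamma$-curve. I will also slide $\alpha$-curves over one another, and $\beta$-curves over one another, so that exactly one $\alpha$-curve and one parallel $\beta$-curve cross that disk region. These should be drawn with arrows, as in Figure \ref{fig:handle slide2}. The result is a local picture matching the $(1;1,0,0)$ diagram of Figure \ref{fig:stabilizationfordiagram}, with a separating curve cutting off a punctured torus. Destabilizing there, via \cite[Lemma 8]{MR4480889}, gives a $(12n+1;1,2,0)$ diagram.

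\textbf{Step 2.} I will repeat Step 1 for the second disk to reach the $(12n;0,2,0)$ diagram of Figure \ref{fig:after_destab4}.

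The main obstacle is Step 1 itself: clearing the $\gamma$-curves so that only one crosses the chosen $\alpha$/$\beta$ pair. This is delicate because of the $-n$ Dehn twists recorded in Figure \ref{fig:notation}. Many $\gamma$-curves wind around these handles, and sliding them may introduce new intersections elsewhere. My first attempt will be to slide the $\gamma$-curves over the $\gamma$-curve coming from the section-type 2-handle, which passes once over the 1-handle. I will then check that the $\alpha$-curves can be isotoped or slid so that they remain disjoint from the new $\gamma$-arcs outside the destabilizing region.
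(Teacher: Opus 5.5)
Your outline has the same skeleton as the paper: two destabilizations that remove $(1;1,0,0)$-summands at the two $1$-handle disks, passing through the $(12n+1;1,2,0)$-diagram of Figure \ref{fig:after_destab3}. The paper destabilizes the disk labeled $B$ first and then the one labeled $A$. However, the lemma is a statement about two specific diagrams, and its entire content is that suitable preparatory slides exist. Your Step 1 only asserts this (``should'', ``probably''), and you leave it as the unresolved main obstacle. So the gap is exactly where the proof has to happen. In addition, the guidance you give for closing it is misdirected in two ways.

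First, the $\alpha$/$\beta$ pairs at the $1$-handle disks are not nearly parallel in Figure \ref{fig:after_destab2}. Step 4 of Theorem \ref{thm:algorithm} had to slide those $\alpha$-curves off the attaching circles running through the $1$-handles, and they are the ``two complicated $\alpha$-curves'' of that figure. The paper's first and main move is to slide these two curves over each $\alpha$-curve on the disks $1,\ldots,12n-2$, $C$, $D$, i.e.\ over every other $\alpha$-curve (Figure \ref{fig:before_destab3.1}). Only after this untangling on the $\alpha$-side are $\gamma$-curves slid (Figures \ref{fig:before_destab3.2} and \ref{fig:before_destab4}) to produce a dual $\gamma$-curve before each destabilization, and no $\beta$-slides are needed.

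Second, the $\gamma$-curve you propose to use cannot play the role you assign it. The $-n$-framed section $2$-handle meets each fiber once and does not run over the $1$-handles at all. The $0$-framed fiber $2$-handle runs over each $1$-handle twice with algebraic count zero. The attaching circles that pass geometrically once over a $1$-handle, and hence actually cancel the $1$-handles, are the $-1$-framed vanishing cycles. So the $\gamma$-curve dual to the parallel pair at $B$ (and later at $A$) has to be found among those vanishing cycles or their slides.
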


\begin{proof}

In Figure \ref{fig:after_destab2}, by sliding the two complicated $\alpha$-curves over each $\alpha$-curve on the disks labeled 1, $\ldots$, $12n-2$, $C$ and $D$ in Figure \ref{fig:after_destab2}, we have Figure \ref{fig:before_destab3.1}. Then, by sliding $\gamma$-curves as arrows in Figure \ref{fig:before_destab3.1}, we obtain Figure \ref{fig:before_destab3.2}. After that, by destabilizing the disk labeled $B$ in Figure \ref{fig:before_destab3.2}, Figure \ref{fig:after_destab3} is obtained. In Figure \ref{fig:after_destab3}, by sliding $\gamma$-curves as arrows in Figure \ref{fig:after_destab3}, we have Figure \ref{fig:before_destab4}. Then, by destabilizing the disk labeled $A$ in Figure \ref{fig:before_destab4}, Figure \ref{fig:after_destab4} is obtained.
\end{proof}

\begin{figure}[h]
\begin{center}
\includegraphics[width=11cm, height=15cm, keepaspectratio, scale=1]{before_destab3.1.pdf}
\end{center}
\setlength{\captionmargin}{50pt}
\caption{A $(12n+2;2,2,0)$-trisection diagram of $E(n)$ obtained from Figure \ref{fig:after_destab2} by the handle slides.}
\label{fig:before_destab3.1}
\end{figure}

\begin{figure}[h]
\begin{center}
\includegraphics[width=11cm, height=15cm, keepaspectratio, scale=1]{before_destab3.2.pdf}
\end{center}
\setlength{\captionmargin}{50pt}
\caption{A $(12n+2;2,2,0)$-trisection diagram of $E(n)$ obtained from Figure \ref{fig:before_destab3.1} by the handle slides.}
\label{fig:before_destab3.2}
\end{figure}

\begin{figure}[h]
\begin{center}
\includegraphics[width=10cm, height=15cm, keepaspectratio, scale=1]{after_destab3.pdf}
\end{center}
\setlength{\captionmargin}{50pt}
\caption{A $(12n+1;1,2,0)$-trisection diagram of $E(n)$ obtained from Figure \ref{fig:before_destab3.2} by the destabilization.}
\label{fig:after_destab3}
\end{figure}

\begin{figure}[h]
\begin{center}
\includegraphics[width=10cm, height=15cm, keepaspectratio, scale=1]{before_destab4.pdf}
\end{center}
\setlength{\captionmargin}{50pt}
\caption{A $(12n+1;1,2,0)$-trisection diagram of $E(n)$ obtained from Figure \ref{fig:after_destab3} by the handle slides.}
\label{fig:before_destab4}
\end{figure}

\begin{figure}[h]
\begin{center}
\includegraphics[width=11cm, height=15cm, keepaspectratio, scale=1]{after_destab4.pdf}
\end{center}
\setlength{\captionmargin}{50pt}
\caption{A $(12n;0,2,0)$-trisection diagram of $E(n)$ obtained from Figure \ref{fig:before_destab4} by the destabilization.}
\label{fig:after_destab4}
\end{figure}

We consider the following lemma before proving Theorem \ref{thm:main theorem}.

\begin{lem}\label{lem:destabilization2}
Let $(\Sigma_g; \beta, \gamma)$ be a Heegaard diagram, where $\beta=\{\beta_1, \ldots, \beta_g\}$ and $\gamma=\{\gamma_1, \ldots, \gamma_g\}$. Suppose that $\beta_i$ is in standard position as in Figure \ref{fig:destabilization2}, intersects with $\gamma_i$ once and does not intersect with other $\gamma$-curves for some $i=1, \ldots, g$. Let $(\Sigma'_{g-1}; \beta', \gamma')$ be a Heegaard diagram obtained by destabilizing $(\Sigma_g; \beta, \gamma)$ for $\beta_i$ and $\gamma_i$, where $\beta'=\{\beta'_1, \ldots, \beta'_{g-1}\}$ and $\gamma'=\{\gamma'_1, \ldots, \gamma'_{g-1}\}$. Then, if $\beta'_1, \ldots, \beta'_n$ are parallel to $\gamma'_1, \ldots, \gamma'_n$ on $\Sigma'_{g-1}$, $\beta_1, \ldots, \beta_n$ can be parallel to $\gamma_1, \ldots, \gamma_n$ on $\Sigma_g$ by handle slides over $\beta_i$.
\end{lem}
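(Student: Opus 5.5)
The idea is to compare $\Sigma_g$ and $\Sigma'_{g-1}$ directly. Destabilizing along $\beta_i$ and $\gamma_i$ means the following. A regular neighborhood $N$ of $\beta_i \cup \gamma_i$ is a punctured torus, and $\Sigma'_{g-1}$ is obtained by removing $N$ and capping the boundary circle $\partial N$ with a disk $D$. Hence $\Sigma_g - N = \Sigma'_{g-1} - D$. The curves $\beta_j,\gamma_j$ for $j\neq i$ differ from $\beta'_j,\gamma'_j$ only in how they pass through $N$.

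By hypothesis, $\gamma_j$ ($j\ne i$) meets $N$ only in arcs parallel to $\gamma_i$. Each such arc crosses $\beta_i$ once, in the standard position of Figure~\ref{fig:destabilization2}. The plan is to first normalize all the $\gamma_j$ and $\beta_j$ for $j \neq i$ away from $N$, and then to use the parallelism given on $\Sigma'_{g-1}$.

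The steps, in order, are as follows.
\begin{enumerate}
\item Consider the other $\beta$-curves $\beta_j$ ($j\neq i$). Each one meeting $\gamma_i$ passes through $N$ in arcs parallel to $\beta_i$. Sliding $\beta_j$ over $\beta_i$ once for each such arc removes that arc from $N$, replacing it by an arc running around $\partial N$. After these slides, each $\beta_j$ is disjoint from $\gamma_i$ and lies in $\Sigma_g - N$ up to isotopy. It therefore corresponds, after capping, exactly to $\beta'_j$ on $\Sigma'_{g-1}$.
\item By hypothesis, $\gamma_j$ ($j \neq i$) does not meet $\beta_i$, so it is already disjoint from the $\beta_i$-direction. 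Its arcs through $N$ parallel to $\gamma_i$ can be pushed off $N$ across $D$. This uses an isotopy on $\Sigma'_{g-1}$, but on $\Sigma_g$ it has to be realized as well. My first attempt is to note that the proposition only asks for $\beta$ to be made parallel to $\gamma$, so we move only $\beta$-curves. I would instead regard the $\gamma_j$ as fixed curves in $\Sigma_g - N$ after an isotopy within $\Sigma_g$, which is possible since $\gamma_j\cap\beta_i=\emptyset$. They then coincide with $\gamma'_j$ away from $D$.
\item On $\Sigma'_{g-1}$, $\beta'_k$ is isotopic to $\gamma'_k$ for $k\le n$. This isotopy may sweep across the disk $D$. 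An isotopy on $\Sigma'_{g-1} - D$ lifts to $\Sigma_g - N$ directly. Each time the isotopy passes $\beta'_k$ across $D$, the corresponding move on $\Sigma_g$ is a pass of $\beta_k$ across the punctured torus $N$. This is realized by a handle slide of $\beta_k$ over $\beta_i$ (a band sum with the parallel copy of $\beta_i$, which bounds with $\partial N$ the relevant subsurface), together with isotopy. Performing these slides gives $\beta_k$ parallel to $\gamma_k$ on $\Sigma_g$ for $k=1,\dots,n$.
\end{enumerate}

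The main obstacle is step 3: justifying that moving a $\beta$-arc across the capping disk $D$ corresponds exactly to handle slides over $\beta_i$ on $\Sigma_g$. On $\Sigma_g$, the curve $\partial N$ is a separating curve bounding the punctured torus, not a disk. Crossing $\partial N$ therefore changes the curve by a band with $\partial N$. I would argue that a band sum with $\partial N$ equals, up to isotopy, two slides over $\beta_i$ (one with each orientation, along bands on either side of $\gamma_i$). This is because $\partial N$ is the commutator-type curve obtained as a band sum of $\beta_i$ with a pushed-off copy of itself around $\gamma_i$. Checking this local picture against Figure~\ref{fig:destabilization2} finishes the argument.
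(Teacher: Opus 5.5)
Your argument is correct and is essentially the paper's proof. You use one slide over $\beta_i$ for each $\beta$-strand crossing $\gamma_i$, and two slides over $\beta_i$ (a band sum with $\partial N$) for each strand that must be carried across the handle, which is what passing across the capping disk $D$ in $\Sigma'_{g-1}$ amounts to; you make explicit, by lifting the isotopy, what the paper reads off its figures. Two slips to fix: $\gamma_j$ ($j\neq i$) is disjoint from both $\beta_i$ and $\gamma_i$, so it misses a thin $N$ altogether, and your sentence claiming it meets $N$ in arcs parallel to $\gamma_i$ that cross $\beta_i$ contradicts the hypothesis and should be deleted; and the single ``handle slide'' in step 3 should read ``two handle slides'', as your final paragraph correctly concludes.
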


\begin{proof}
In Figure \ref{fig:destabilization2}, some $\beta$-curves can be parallel to $\gamma_{j+1}, \ldots, \gamma_{n}$ by handle sliding them over $\beta_i$ once for each strand. The other $\beta$-curves can be parallel to $\gamma_{1}, \ldots, \gamma_{j}$ by handle sliding them over $\beta_i$ twice for each strand as in Figure \ref{fig:handle slide}. 
\end{proof}

\begin{figure}[h]
\begin{center}
\includegraphics[width=8cm, height=15cm, keepaspectratio, scale=1]{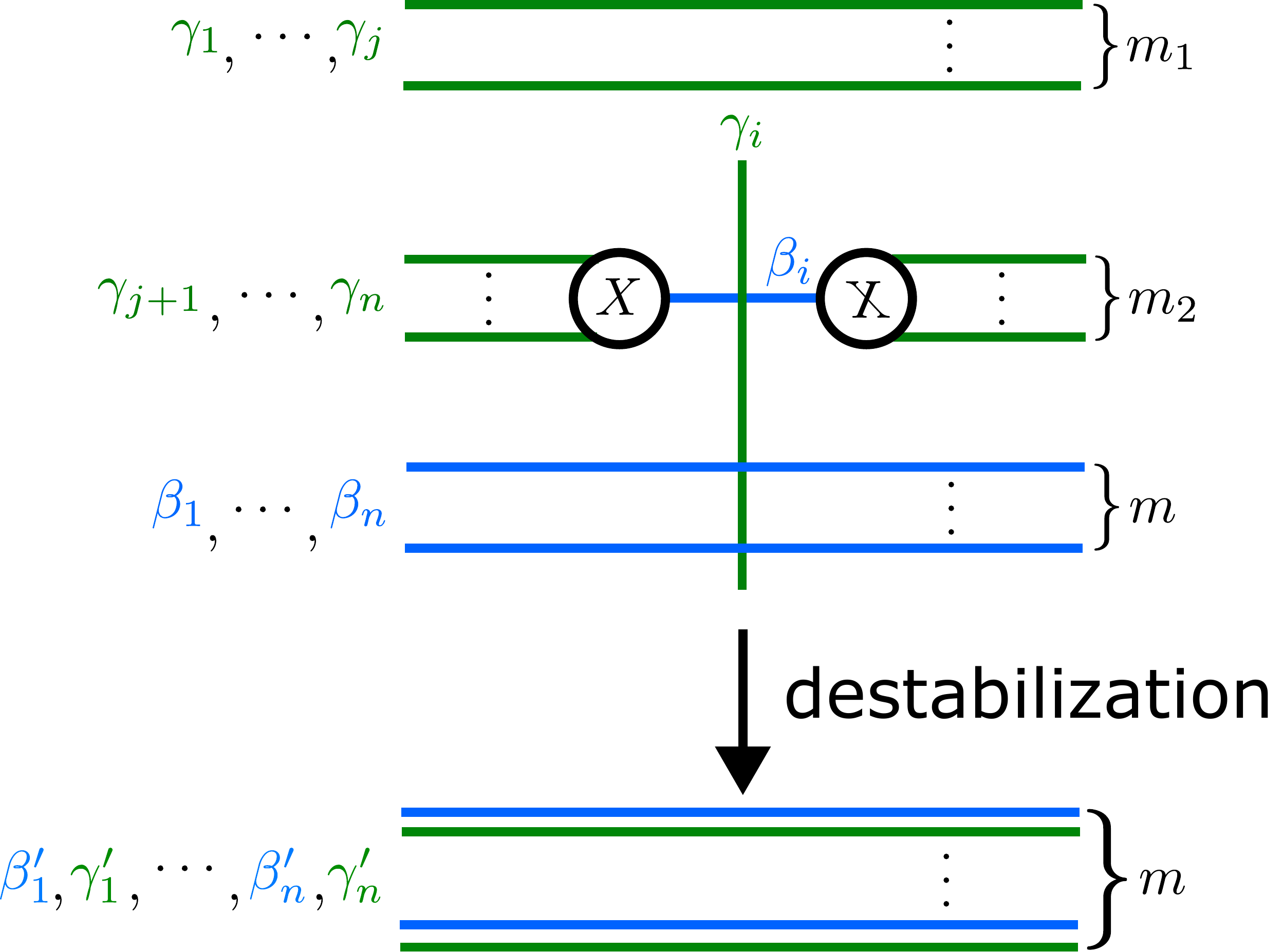}
\end{center}
\setlength{\captionmargin}{45pt}
\caption{A destabilization for a Heegaard diagram. If $\beta'_1, \ldots, \beta'_n$ are parallel to $\gamma'_1, \ldots, \gamma'_n$, then $\beta_1, \ldots, \beta_n$ can be parallel to $\gamma_1, \ldots, \gamma_n$ by handle slides over $\beta_i$.}
\label{fig:destabilization2}
\end{figure}

\begin{figure}[h]
\begin{center}
\includegraphics[width=7cm, height=15cm, keepaspectratio, scale=1]{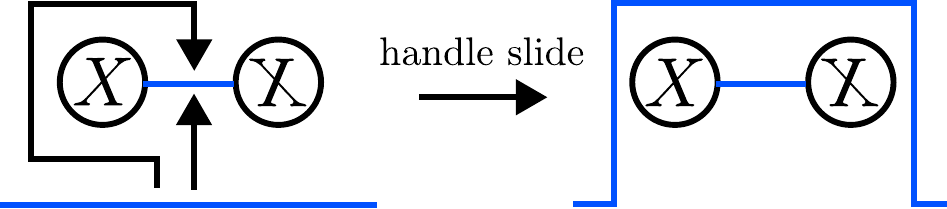}
\end{center}
\setlength{\captionmargin}{50pt}
\caption{Handle slides for $\beta$-curves in Lemma \ref{lem:destabilization2}.}
\label{fig:handle slide}
\end{figure}

\begin{proof}[Proof of Theorem \ref{thm:main theorem}]

Combining Lemmas \ref{lem:destab1}, \ref{lem:destab2} and \ref{lem:destab3}, we have the $(12n;0,2,0)$-trisection diagram of $E(n)$ in Figure \ref{fig:after_destab4} from the $(36n^2+6n+6;2,2,36n^2-6n+4)$-trisection diagram of $E(n)$ in Figure \ref{fig:step6} by handle slides and destabilizations. Thus, it suffices to show that a $(12n-2,0)$-trisection diagram of $E(n)$ can be obtained from Figure \ref{fig:after_destab4} by handle slides and destabilizations.

In Figure \ref{fig:after_destab4}, by sliding three $\gamma$-strands labeled the rectangle over $\gamma_1$, $\gamma_3$, $\ldots$, $\gamma_{12n-5}$ each twice as arrows in Figure \ref{fig:after_destab4}, we have Figure \ref{fig:god1.1}. 
In Figure \ref{fig:god1.1}, by sliding three $\gamma$-strands labeled the rectangle over $\gamma_2$, $\gamma_4$, $\ldots$, $\gamma_{12n-4}$ each twice as arrows in Figure \ref{fig:god1.1}, we have Figure \ref{fig:god1.2}. In Figure \ref{fig:god1.2}, by sliding $\beta_1$, $\beta_3$, $\ldots$, $\beta_{12n-3}$ over $\beta_C$ and $\beta_D$ each twice as in Figure \ref{fig:handle slide}, we have Figure \ref{fig:god2}. 
In Figure \ref{fig:god2}, by sliding $\gamma$-curves and black curve labeled $t^{-n}$ as arrows in Figure \ref{fig:god2}, we have Figure \ref{fig:last}. 
See \cite[Lemma 3.4]{MR5029882} for the slide of the black curve labeled $t^{-n}$. 
In Figure \ref{fig:last}, by sliding $\gamma$-curves as arrows in Figure \ref{fig:last} in order, we have Figure \ref{fig:last2}. In Figure \ref{fig:last2}, by sliding $\gamma$-curves as arrows in Figure \ref{fig:last2}, we have Figure \ref{fig:last3}.

We now focus on the Heegaard diagram $(\beta, \gamma)$ in Figure \ref{fig:last3}. Note that unless otherwise stated, we do not rename the curves even after performing handle slides or destabilizations. 
We can destabilize $(\beta, \gamma)$ for $\beta_1$ and $\gamma_1$ since $\beta_1$ intersects with only $\gamma_1$ once. 
In the destabilized Heegaard diagram, since $\beta_2$ intersects with only $\gamma_2$ once, we can destabilize the Heegaard diagram again. 
By performing such destabilizations for $\beta$- and $\gamma$-curves labeled $3,4, \ldots, 12n-4$, $C$ and $D$, we can obtain a genus-2 Heegaard diagram $(\beta, \gamma)$ of $\#_2 S^1 \times S^2$, where $\beta=\{\beta_{12n-3}, \beta_{12n-2}\}$ and $\gamma=\{\gamma^s_\ast, \gamma^s_{\ast \ast}\}$.  
Here, the $\gamma$-curves obtained by handle sliding $\gamma_\ast$ and $\gamma_{\ast \ast}$ over $\gamma_3, \ldots, \gamma_{12n-4}$ so that each $\beta_i$ intersects with only $\gamma_i$ are denoted by $\gamma^s_\ast$ and $\gamma^s_{\ast \ast}$.
Since genus $g$ Heegaard splittings of $\#_n S^1 \times S^2$ are unique up to isotopy for each $g \ge n$ \cite{MR227992}, $\beta=\{\beta_{12n-3}, \beta_{12n-2}\}$ can be parallel to $\gamma=\{\gamma^s_\ast, \gamma^s_{\ast \ast}\}$ by just handle slides in the genus-2 Heegaard diagram. 
Thus, by Lemma \ref{lem:destabilization2}, $\beta_{12n-3}$ and $\beta_{12n-2}$ also can be parallel to $\gamma^s_\ast$ and $\gamma^s_{\ast \ast}$ by just handle slides in Figure \ref{fig:last3}. 

Since $\alpha_1$ and $\alpha_2$ in Figure \ref{fig:last3} intersects with $\gamma^s_\ast$ and $\gamma^s_{\ast \ast}$ once, respectively, we can destabilize Figure \ref{fig:last3} for firstly $\alpha_1$ and parallel curves $\beta_{12n-3}$ and $\gamma^s_\ast$, and then $\alpha_2$ and parallel curves $\beta_{12n-2}$ and $\gamma^s_{\ast \ast}$. 
This completes the proof.
\end{proof}

\begin{figure}[h]
\begin{center}
\includegraphics[width=11cm, height=15cm, keepaspectratio, scale=1]{god1.1.pdf}
\end{center}
\setlength{\captionmargin}{50pt}
\caption{A $(12n;0,2,0)$-trisection diagram of $E(n)$ obtained from Figure \ref{fig:after_destab4} by the handle slides.}
\label{fig:god1.1}
\end{figure}

\begin{figure}[h]
\begin{center}
\includegraphics[width=11cm, height=15cm, keepaspectratio, scale=1]{god1.2.pdf}
\end{center}
\setlength{\captionmargin}{50pt}
\caption{A $(12n;0,2,0)$-trisection diagram of $E(n)$ obtained from Figure \ref{fig:god1.1} by the handle slides.}
\label{fig:god1.2}
\end{figure}

\begin{figure}[h]
\begin{center}
\includegraphics[width=10.3cm, height=15cm, keepaspectratio, scale=1]{god2.pdf}
\end{center}
\setlength{\captionmargin}{20pt}
\caption{A $(12n;0,2,0)$-trisection diagram of $E(n)$ obtained from Figure \ref{fig:god1.2} by the handle slides. See Figure \ref{fig:notation} for the black arrow labeled $t^{-n}$.}
\label{fig:god2}
\end{figure}

\begin{figure}[h]
\begin{center}
\includegraphics[width=10.3cm, height=15cm, keepaspectratio, scale=1]{last.pdf}
\end{center}
\setlength{\captionmargin}{50pt}
\caption{A $(12n;0,2,0)$-trisection diagram of $E(n)$ obtained from Figure \ref{fig:god2} by the handle slides.}
\label{fig:last}
\end{figure}

\begin{figure}[h]
\begin{center}
\includegraphics[width=11cm, height=15cm, keepaspectratio, scale=1]{last2.pdf}
\end{center}
\setlength{\captionmargin}{50pt}
\caption{A $(12n;0,2,0)$-trisection diagram of $E(n)$ obtained from Figure \ref{fig:last} by the handle slides.}
\label{fig:last2}
\end{figure}

\begin{figure}[h]
\begin{center}
\includegraphics[width=11cm, height=15cm, keepaspectratio, scale=1]{last3.pdf}
\end{center}
\setlength{\captionmargin}{50pt}
\caption{A $(12n;0,2,0)$-trisection diagram of $E(n)$ obtained from Figure \ref{fig:last2} by the handle slides.}
\label{fig:last3}
\end{figure}

\begin{rem}
In the process of obtaining the minimal genus trisection diagram of $E(n)$ from Figure \ref{fig:step6}, we do not use surface diffeomorphisms and stabilizations (see Corollary \ref{cor:henkei}).
\end{rem}

From Theorem \ref{thm:main theorem}, we have the following corollary (see \cite[Theorem 7.7]{MR4493476}).

\begin{cor}\label{cor:genus}
The trisection genus of $E(n)$ is $12n-2$.
\end{cor}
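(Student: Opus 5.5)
Corollary \ref{cor:genus} follows by combining an upper bound with a lower bound. For the upper bound, Theorem \ref{thm:main theorem} gives an explicit $(12n-2,0)$-trisection diagram of $E(n)$. By Remark \ref{rem:correspond}, this diagram determines a genus $12n-2$ trisection of $E(n)$, so $g(E(n)) \le 12n-2$.

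For the lower bound, I use the Euler characteristic formula recalled in the proof of Proposition \ref{prop:E(n)}: a $(g;k_1,k_2,k_3)$-trisection of $X$ satisfies $\chi(X)=2+g-(k_1+k_2+k_3)$. Since $k_i\ge 0$, this gives $g \ge \chi(X)-2$. For $X=E(n)$ we have $\chi(E(n))=12n$, so every trisection of $E(n)$ has genus at least $12n-2$. Equality forces $k_1=k_2=k_3=0$, which is consistent with the diagram from Theorem \ref{thm:main theorem}.

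Combining the two bounds gives $g(E(n))=12n-2$. There is no real obstacle here. All the difficulty lies in Theorem \ref{thm:main theorem}; the only thing to confirm is the value $\chi(E(n))=12n$. This value was already used in Proposition \ref{prop:E(n)}, and it also follows from the Lefschetz fibration, which has $12n$ singular fibers over $S^2$ with torus fibers, giving $\chi = 0\cdot 2 + 12n$.
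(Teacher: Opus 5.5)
Your proof is correct and follows the same route the paper intends: the upper bound comes from the $(12n-2,0)$-trisection diagram of Theorem \ref{thm:main theorem}, and the lower bound comes from $\chi(X)=2+g-(k_1+k_2+k_3)$ with $k_i\ge 0$ and $\chi(E(n))=12n$. The paper only writes that the corollary follows from the theorem (citing Lambert-Cole and Meier); you have spelled out the lower bound it leaves implicit.
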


We can obtain the minimal genus trisection of $E(n)$ corresponding to the minimal genus trisection diagram of $E(n)$ in Theorem \ref{thm:main theorem} in the sense of Remark \ref{rem:correspond}.

\begin{que}\label{que:isotopic}
Are the minimal genus trisection of $E(n)$ in Theorem \ref{thm:main theorem} and the one in \cite[Theorem 7.7]{MR4493476}, isotopic?
\end{que}

\bibliographystyle{amsalpha}
\bibliography{trisection, GS.Wald, math}

\end{document}